\newcommand{\TITLE}{The Monogeneity of Kummer Extensions and Radical Extensions}
\newcommand{\TITLERUNNING}{}
\theoremstyle{plain}
\newtheorem{theorem}{Theorem}
\newtheorem{proposition}[theorem]{Proposition}
\newtheorem{lemma}[theorem]{Lemma}
\newtheorem{corollary}[theorem]{Corollary}
\newtheorem{porism}[theorem]{Porism}
\theoremstyle{definition}
\theoremstyle{remark}
\newtheorem{remark}[theorem]{Remark}
\newtheorem{example}[theorem]{Example}
\numberwithin{theorem}{section}
\newcommand{\tightoverset}[2]{%
  \mathop{#2}\limits^{\vbox to -.5ex{\kern-1.05ex\hbox{$#1$}\vss}}}
\numberwithin{equation}{section} 
\newcommand{\gp}{{\mathfrak{p}}}
\newcommand{\gl}{{\mathfrak{l}}}
\def\Ocal{{\mathcal O}}
\def\pcal{\mathcal{p}}
\newcommand{\FF}{\mathbb{F}}
\newcommand{\QQ}{\mathbb{Q}}
\newcommand{\ZZ}{\mathbb{Z}}
\newcommand{\dnd}{\nmid}
\newcommand{\nth}[1]{{#1}^\text{th}}
\title[\TITLERUNNING]{\vspace*{-1.3cm} \TITLE}
\date{\today}
\author[Hanson Smith]{Hanson Smith}
\address{%
Department of Mathematics, University of Colorado,
Campus Box 395, Boulder, Colorado 80309-0395}
\email{hanson.smith@colorado.edu or hansonsmith101@gmail.com}
\keywords{Monogenic, Power Integral Basis, Relative Integral Basis, Kummer Extension, Radical Extension}
\subjclass[2010]{11R04, 11R18, 11R20}
\thanks{The author would like to thank Sebastian Bozlee, Keith Conrad, and Katherine Stange. The author is especially grateful to Keith for the idea to look into more general radical extensions. The author would also like to thank Anuj Jakhar and the anonymous referee for noticing a mistake in the original proof of Theorem \ref{radical}}
\begin{document}

\sloppy 

\begin{abstract}
We give necessary and sufficient conditions for the Kummer extension $K:=\mathbb{Q}\left(\zeta_n,\sqrt[n]{\alpha}\right)$ to be monogenic over $\mathbb{Q}(\zeta_n)$ with $\sqrt[n]{\alpha}$ as a generator, i.e., for $\mathcal{O}_K=\mathbb{Z}\left[\zeta_n\right]\left[\sqrt[n]{\alpha}\right]$. We generalize these ideas to radical extensions of an arbitrary number field $L$ and provide necessary and sufficient conditions for $\sqrt[n]{\alpha}$ to generate a power $\mathcal{O}_L$-basis for $\mathcal{O}_{L\left(\sqrt[n]{\alpha}\right)}$. 

We also give sufficient conditions for $K$ to be {\bf non}-monogenic over $\mathbb{Q}$ and establish a general criterion relating ramification and relative monogeneity. Using this criterion, we find a necessary and sufficient condition for a relative cyclotomic extension of degree $\phi(n)$ to have $\zeta_n$ as a monogenic generator.
\end{abstract}

\maketitle

\section{Results and Previous Work}
Let $L$ be a number field. We will always denote the ring of integers by $\Ocal_L$. Suppose $M$ is a finite extension of $L$. If $\Ocal_M=\Ocal_L[\theta]$ for an algebraic integer $\theta\in M$, then we say \emph{$M$ is monogenic over $L$} or $\Ocal_M$ has a \emph{power $\Ocal_L$-basis.} We note that in general $\Ocal_M$ may not be free over $\Ocal_L$; however, monogeneity implies freeness. When $L$ is $\QQ$ we will simply say $M$ is \emph{monogenic} or $\Ocal_M$ has a \emph{power integral basis.} 

Suppose for the moment that $L$ is a number field containing a primitive $n^{\text{th}}$ root of unity, $\zeta_n$. A \emph{Kummer extension of degree $n$} is an extension of the form $L\left(\sqrt[n]{\alpha}\right)$, where $x^n-\alpha$ is irreducible over $L$. The Kummer extensions of $L$ of degree $n$ are exactly the cyclic extensions of $L$ of order $n$. When $L=\QQ\left(\zeta_n\right)$, a Kummer extension will be denoted by $K$. If $L$ is an arbitrary number field (not necessarily containing the $n^{\text{th}}$ roots of unity), we call an extension of the form $L\left(\sqrt[n]{\alpha}\right)$ a \emph{radical extension}\footnote{Radical number fields are also known as `pure.'}. Letting $L$ again be arbitrary, when $n=2k$ with $k$ odd, one has $L(\zeta_n)=L\left(\zeta_k\right)$. For this reason, when we speak of the $n^{\text{th}}$ cyclotomic field or an $n^{\text{th}}$ root of unity, it is often assumed that $n\not \equiv 2 \bmod 4$. Context will make our intent clear.

The main result of this paper is Theorem \ref{radical}, where we describe necessary and sufficient conditions for the ring of integers of the radical extension $L\left(\sqrt[n]{\alpha}\right)$ to have a power $\Ocal_L$-basis generated by $\sqrt[n]{\alpha}$. This result can be illustrated by the important special case of Kummer extensions, which we state below for $n$ an odd prime. In our investigation of Kummer extensions, we also obtain sufficient conditions for when $K$ \textbf{is not} monogenic over $\QQ$; this is stated below as well.


\begin{figure}[h!]
\hspace*{-.5 in}\xymatrix{
& & K:=\QQ\left(\zeta_n\right)\left(\sqrt[n]{\alpha}\right) \ar@{-}[d]_{\ZZ/n\ZZ}\\
& & \QQ\left(\zeta_n\right) \ar@{-}[d]_{\left(\ZZ/n\ZZ\right)^*}\\
& & \QQ}
\caption{Kummer extensions we consider}
\protect{\label{Diamond}}
\end{figure} 


\begin{theorem}\label{KoverQzeta}
Let $p$ be an odd, rational prime, and let $\gp:=(1-\zeta_p)$ be the unique prime of $\ZZ[\zeta_p]$ above $p$. Let $\alpha\in \ZZ[\zeta_p]$, and suppose that $x^p-\alpha$ is irreducible in $\ZZ[\zeta_p][x]$. The ring of integers $\Ocal_{\QQ\left(\zeta_p,\sqrt[p]{\alpha}\right)}$ is $\ZZ[\zeta_p]\left[\sqrt[p]{\alpha}\right]$ if and only if $\alpha$ is square-free as an ideal\footnote{Note that the unit ideal is square-free.} of $\ZZ[\zeta_p]$ and the congruence 
\begin{equation}\label{Wiererichzeta}
\alpha^p\equiv\alpha \bmod {(1-\zeta_p)^2}
\end{equation} 
\textbf{is not} satisfied.
\end{theorem}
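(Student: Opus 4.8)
The plan is to test monogeneity one prime at a time. Since $\ZZ[\z_p]$ is a Dedekind domain and $\ZZ[\z_p][\theta]\subseteq\Ocal_K$ (with $\theta:=\sqrt[p]{\alpha}$) are $\ZZ[\z_p]$-lattices of the same rank, equality $\Ocal_K=\ZZ[\z_p][\theta]$ holds if and only if $\ZZ[\z_p][\theta]$ is maximal after localizing at every prime $\gq$ of $\ZZ[\z_p]$. I would test $\gq$-maximality with Dedekind's criterion applied to $f(x)=x^p-\alpha$: reduce $\bar f$ modulo $\gq$, write $\bar f=\prod_i\bar\phi_i^{e_i}$, take the radical lift $g_1=\prod_i\phi_i$ and a lift $h_1$ of $\bar f/\bar g_1$, set $F_1=(f-g_1h_1)/\pi_\gq$ for a local uniformizer $\pi_\gq$, and recall that $\ZZ[\z_p][\theta]$ is $\gq$-maximal exactly when $\gcd(\bar g_1,\bar h_1,\bar F_1)=1$. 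Because $\disc(x^p-\alpha)=\pm p^p\alpha^{p-1}$, only $\gp=(1-\z_p)$ and the primes dividing $\alpha$ can cause failure; everything else is automatically maximal. The argument then splits into the tame primes $\gq\neq\gp$ and the single wild prime $\gp$.

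For $\gq\neq\gp$ we have $\gq\nmid p$, so $p$ is a unit modulo $\gq$. If $\gq\nmid\alpha$ then $\bar f=x^p-\bar\alpha$ is separable and Dedekind's criterion is satisfied trivially. If $\gq\mid\alpha$ then $\bar\alpha=0$, so $\bar f=x^p$, giving $g_1=x$, $h_1=x^{p-1}$, and $f-g_1h_1=-\alpha$; hence $\bar F_1=\overline{-\alpha/\pi_\gq}$ is a constant in the residue field, which is nonzero precisely when $v_\gq(\alpha)=1$. Thus $\gcd(\bar g_1,\bar h_1,\bar F_1)=\gcd(x,\bar F_1)=1$ iff $v_\gq(\alpha)=1$. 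Collecting these, $\ZZ[\z_p][\theta]$ is maximal at every $\gq\neq\gp$ exactly when $v_\gq(\alpha)\le 1$ for all such $\gq$, i.e. when $\alpha$ is square-free away from $\gp$.

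The wild prime $\gp$ is the crux and the main obstacle, since it carries the residue characteristic $p$ and the degree-$p$ extension, and this is where the congruence \eqref{Wiererichzeta} must emerge. Here the residue field is $\FF_p$, so $\bar\alpha\in\FF_p$ satisfies $\bar\alpha^p=\bar\alpha$ and $\bar f=(x-c)^p$ for any lift $c$ of $\bar\alpha$; take $g_1=x-c$, $h_1=(x-c)^{p-1}$. I would compute $f-g_1h_1=(x^p-\alpha)-(x-c)^p$ and examine its coefficients modulo $\gp^2$. The middle coefficients are $\pm\binom{p}{k}c^k$ for $1\le k\le p-1$, each divisible by $p$ and hence of $\gp$-valuation $\ge p-1\ge 2$, so after dividing by the uniformizer $\lambda:=1-\z_p$ they still reduce to $0$ in $\FF_p$. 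The constant term is $c^p-\alpha$, and the key routine estimate $c^p\equiv\alpha^p\pmod{\gp^2}$ (expand $c=\alpha+\lambda\gamma$ and note every correction term has $\gp$-valuation $\ge 2$) gives $c^p-\alpha\equiv\alpha^p-\alpha\pmod{\gp^2}$. Therefore $\bar F_1=\overline{(c^p-\alpha)/\lambda}$ is a constant in $\FF_p$ that is nonzero iff $v_\gp(\alpha^p-\alpha)=1$, i.e. iff the congruence \eqref{Wiererichzeta} \emph{fails}. Since $\gcd(\bar g_1,\bar h_1)=x-c$, Dedekind's criterion yields maximality at $\gp$ exactly when $\bar F_1$ is a nonzero constant, that is, exactly when \eqref{Wiererichzeta} fails. (The same computation with $\bar\alpha=0$ covers the case $\gp\mid\alpha$ and shows $\gp$-maximality there is equivalent to $v_\gp(\alpha)=1$, which again matches \eqref{Wiererichzeta} failing.)

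Finally I would assemble the local conditions: $\Ocal_K=\ZZ[\z_p][\theta]$ iff $\alpha$ is square-free away from $\gp$ and \eqref{Wiererichzeta} fails. To recover the stated form, observe that if $v_\gp(\alpha)\ge 2$ then $\alpha^p\equiv\alpha\equiv 0\pmod{\gp^2}$, so \eqref{Wiererichzeta} holds; contrapositively, the failure of \eqref{Wiererichzeta} forces $v_\gp(\alpha)\le 1$, i.e. square-freeness at $\gp$ as well. Hence ``square-free away from $\gp$ and \eqref{Wiererichzeta} fails'' is equivalent to ``$\alpha$ square-free as an ideal and \eqref{Wiererichzeta} fails,'' which is the theorem. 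I expect the only genuinely delicate point to be the $\gp$-adic bookkeeping in the previous paragraph—verifying that the binomial middle terms die modulo $\gp^2$ while isolating the constant term as the congruence obstruction—everything else being a clean application of Dedekind's criterion.
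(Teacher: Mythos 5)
Your proposal is correct and follows essentially the same route as the paper: both arguments reduce to the primes dividing $\disc(x^p-\alpha)=\pm p^p\alpha^{p-1}$, apply the relative Dedekind index criterion prime by prime (the paper via the Kumar--Khanduja formulation, you via the equivalent classical $\gcd(\bar g_1,\bar h_1,\bar F_1)=1$ form), obtain square-freeness from the factorization $\overline{x^p}$ at primes dividing $\alpha$, and extract the Wieferich-type congruence at $\gp=(1-\zeta_p)$ from the repeated linear factor. The only cosmetic difference is that the paper lifts the factor as $x-\alpha$ itself, which makes the constant term exactly $\alpha^p-\alpha$ and renders your $c=\alpha+\lambda\gamma$ bookkeeping unnecessary; your explicit handling of the case $\gp^2\mid\alpha$ in the final assembly is a point the paper treats only implicitly.
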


\begin{theorem}\label{KoverQ} Suppose there exists a rational prime $l$ such that $l\equiv 1 \bmod n$ and $l<n\cdot\phi(n)$. Let $\alpha \in \ZZ\left[\zeta_n\right]$ be relatively prime to $l$.  Suppose further that $\alpha$ is an $\nth{n}$ power residue modulo some prime of $\ZZ\left[\zeta_n\right]$ above $l$ and that $x^n-\alpha$ is irreducible in $\ZZ\left[\zeta_n\right][x]$. Then $K=\QQ\left(\zeta_n,\sqrt[n]{\alpha}\right)$ \textbf{is not} monogenic over $\QQ$. Moreover, $l$ is an essential discriminant divisor, i.e., $l$ divides $\left[\Ocal_K:\ZZ[\theta]\right]$ for every integer $\theta$ such that $\QQ(\theta)=K$.
\end{theorem}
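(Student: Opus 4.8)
The plan is to show that $l$ is a \emph{common index divisor}, which is exactly the assertion that $l$ is an essential discriminant divisor, via a pigeonhole argument on factorizations modulo $l$. The tool is the Dedekind--Hensel criterion: if $l$ is unramified in $K$ and $\theta\in\Ocal_K$ generates $K$ over $\QQ$, then $l\nmid[\Ocal_K:\ZZ[\theta]]$ forces the minimal polynomial of $\theta$ to factor modulo $l$ into \emph{distinct} monic irreducibles whose degrees match the residue degrees of the primes of $\Ocal_K$ above $l$. In particular, if the number of primes above $l$ of residue degree one exceeds the number of monic linear polynomials over $\FF_l$, which is $l$, then no such $\theta$ exists and $l$ divides every index $[\Ocal_K:\ZZ[\theta]]$. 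So the whole proof reduces to counting degree-one primes above $l$ and comparing with $l$.

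First I would record that $l$ is unramified in $K$: since $l\equiv 1\bmod n$ we have $l\nmid n$, and since $\alpha$ is prime to $l$ the binomial $x^n-\alpha$ is separable modulo every prime above $l$ (its derivative $nx^{n-1}$ shares no root with it there), which rules out ramification in both $\QQ(\zeta_n)/\QQ$ and $K/\QQ(\zeta_n)$. Next I would pin down the splitting. Because $l\equiv 1\bmod n$, the prime $l$ splits completely in $\QQ(\zeta_n)$ into $\phi(n)$ primes $\gl_1,\dots,\gl_{\phi(n)}$ of residue degree one, each with residue field $\FF_l$, which contains all $n$ distinct $n$th roots of unity. By Kummer theory all primes of $\Ocal_K$ above a given $\gl_i$ share a single residue degree $d_i$, equal to the order of $\alpha$ in $\FF_l^{\ast}/(\FF_l^{\ast})^n$ computed through $\Ocal_{\QQ(\zeta_n)}/\gl_i\cong\FF_l$; in particular $\gl_i$ splits completely in $K$, contributing exactly $n$ primes of residue degree one over $\QQ$, precisely when $\alpha$ is an $n$th power residue modulo $\gl_i$. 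Consequently the number of degree-one primes above $l$ is exactly $n\cdot s$, where $s=\#\{\,i:\alpha\text{ is an }n\text{th power residue modulo }\gl_i\,\}$, and the pigeonhole inequality I need is $ns>l$, i.e.\ $s>l/n$.

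The heart of the matter, and the step I expect to be the main obstacle, is turning the hypothesis that $\alpha$ is an $n$th power residue modulo \emph{one} prime above $l$ into the statement that $s>l/n$. The natural target is $s=\phi(n)$, i.e.\ complete splitting at every $\gl_i$, for then $l$ carries $n\phi(n)$ primes of residue degree one and the hypothesis $l<n\phi(n)$ delivers $n\phi(n)>l$ immediately; note $l/n<\phi(n)$, so it would in fact suffice to show that more than $l/n$ of the Galois orbit $\{\gl_i\}$ split completely. Establishing that the power-residue condition propagates across this orbit is the crux, since conjugation by $\Gal(\QQ(\zeta_n)/\QQ)$ moves both $\alpha$ and the prime; this is where the irreducibility of $x^n-\alpha$ and the sharp bound $l<n\phi(n)$ must be brought to bear, and it is the only part of the argument that is not purely formal.

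Once the count of degree-one primes above $l$ is known to exceed $l$, the conclusion is immediate from the criterion: no monic polynomial over $\FF_l$ has that many distinct linear factors, so every $\theta$ with $\QQ(\theta)=K$ satisfies $l\mid[\Ocal_K:\ZZ[\theta]]$. Hence $\Ocal_K\neq\ZZ[\theta]$ for all such $\theta$, so $K$ is not monogenic over $\QQ$ and $l$ is an essential discriminant divisor. I would isolate the counting bound (the $l$ monic linear polynomials over $\FF_l$) and the Kummer-theoretic splitting computation as lemmas, presenting the propagation of the power-residue condition across the primes above $l$ as the substantive step.
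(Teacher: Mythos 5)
Your overall strategy is the same as the paper's: show that $l$ splits completely in $K$, then invoke the Dedekind--Hensel counting argument (more primes of residue degree one above $l$ than monic linear polynomials in $\FF_l[x]$, using $l<n\cdot\phi(n)$) to conclude both non-monogeneity and that $l$ is a common index divisor. The counting half of your argument is correct and matches the paper, which cites its Theorem \ref{DedCrit} and Theorem \ref{Hensel} for exactly this purpose.

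However, there is a genuine gap at the step you yourself flag as ``the crux'': you never prove that the $n$th-power-residue hypothesis at \emph{one} prime $\gl$ above $l$ forces enough (or all) of the primes $\gl_1,\dots,\gl_{\phi(n)}$ to split completely in $K$. A proof that announces its key step as an expected obstacle and leaves it open is incomplete. Moreover, your guess about what fills the gap is off target: neither the bound $l<n\cdot\phi(n)$ nor the irreducibility of $x^n-\alpha$ is what makes the propagation work (the bound is used only in the pigeonhole count, and irreducibility only guarantees $[K:\QQ]=n\phi(n)$). The missing observation is simply that $K/\QQ$ is \emph{Galois}: since $\zeta_n\in K$, the field $K$ is the splitting field of $x^n-\alpha$ over $\QQ$. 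In a Galois extension all primes above $l$ share the same ramification index and residue degree. By Lemma \ref{cyclosplit}, $\gl$ has $e=f=1$ over $l$; by Lemma \ref{Kummersplit} (using $\gl\nmid n\alpha$ and the power-residue hypothesis), the primes of $K$ above $\gl$ have $e=f=1$ over $\gl$, hence over $\QQ$. Galois-ness then forces \emph{every} prime of $K$ above $l$ to have $e=f=1$, i.e.\ $l$ splits completely into $n\phi(n)$ primes, and your pigeonhole argument closes the proof. Note that your worry about conjugation moving both $\alpha$ and the prime is legitimate if one works only inside $\QQ(\zeta_n)$ (conjugation shows $\sigma(\alpha)$ is a power residue mod $\sigma(\gl)$, not $\alpha$ itself); the resolution is to pass to the top field $K$, where the Galois action on its primes settles the matter at once.
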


Theorem \ref{KoverQzeta} stands in marked contrast to the situation over $\QQ$. Gras \cite{Grasprime} shows that the only monogenic abelian extensions of $\QQ$ of prime degree $\geq 5$ are maximal real subfields of cyclotomic fields. In order to obtain a single monogenic abelian extension of prime degree $p\geq 5$, we must have $p=\frac{\phi(n)}{2}$, where $\phi$ is Euler's phi function. Over $\QQ(\zeta_p)$, however, we are able to construct infinitely many monogenic abelian extensions of prime degree $p$.

In addition to the theorems mentioned above, we give a more general criterion relating ramification to relative monogeneity, Proposition \ref{nonmonorelative}. The proof of Proposition \ref{nonmonorelative} serves to highlight our methods. This proposition is then applied to prove Corollary \ref{ZetaNotMono}:  For an arbitrary number field $L$, the ring of integers $\Ocal_{L(\zeta_n)}=\Ocal_L\left[\zeta_n\right]$ if and only if $\gcd\left(n,\Delta_L\right)=1$. We use the classical strategy of Dedekind to prove Theorem \ref{KoverQ}, while our other results are established using a generalization, by Kumar and Khanduja, of Dedekind's index criterion to relative extensions (Theorem \ref{Dedekindindex}).

The outline of the paper is as follows. At the end of this section we will briefly survey the literature regarding the monogeneity of abelian extensions, relative monogeneity, and the monogeneity of radical extensions. Section \ref{background} recalls the necessary tools that we will use. With Section \ref{monoandram}, we state and prove our proposition relating relative monogeneity and ramification. Section \ref{Kummermono} is concerned with the proof of Theorem \ref{KoverQzeta}. This section also serves to illustrate how we will approach the proof of Theorem \ref{radical}. In Section \ref{Kummernonmono}, we prove Theorem \ref{KoverQ}. Finally, Section \ref{genradsection} states and establishes our main result on the monogeneity of radical extensions.

The literature regarding monogenic fields is extensive:  For a nice treatise on monogeneity that focuses on using index form equations, see Ga\'al's book \cite{GaalsBook}. 
With the inclusion of numerous references, this book is likely the most modern and thorough survey of the subject. For another bibliography of monogeneity, see Narkiewicz's text \cite[pages 79-81]{Nark}. Narkiewicz also considers monogeneity in \cite{AlgebraicStory}. Zhang's brief survey \cite{ZhangSurvey}, though unpublished, is a nice overview. 

Investigations into the monogeneity of abelian number fields are classical. For example, the monogeneity of quadratic fields is immediate and the monogeneity of cyclotomic fields was established very early. As mentioned above, Gras \cite{Grasprime} has shown that, with the exception of the maximal real subfields of cyclotomic fields, abelian extensions of $\QQ$ of prime degree greater than or equal to 5 are not monogenic. Generally, Gras \cite{Gras} has shown that almost all abelian extensions of $\QQ$ with degree coprime to 6 are not monogenic. The extensions of $\QQ$ we show are non-monogenic in this paper generally have degree divisible by 2. Previous to Gras, Payan \cite{Payan} found necessary conditions for monogeneity of certain cyclic extensions. Cougnard \cite{Cougnard} builds on the ideas of Payan and establishes more stringent conditions for an imaginary quadratic field to have a monogenic cyclic extension of prime degree. Ichimura \cite{Ichimura} establishes the equivalence of a certain unramified Kummer extension being monogenic over its base field and the Kummer extension being given by the $p^{\text{th}}$ root of a unit of a specified shape. 
Khan, Katayama, Nakahara, and Uehara \cite{NonMonoCyclo} study the monogeneity of the compositum of a cyclotomic field, with odd conductor $n\geq 3$ or even conductor $n\geq 8$ with $4\mid n$, and a totally real number field, distinct from $\QQ$ and with discriminant coprime to the discriminant of the cyclotomic field. They show that no such compositum is monogenic. The monogeneity of the compositum of a real abelian field and an imaginary quadratic field is studied by Motoda, Nakahara, and Shah \cite{MotodaNakaharaShah}. When the conductors are relatively prime and the imaginary quadratic field is not $\QQ(i)$, they establish that monogeneity is not possible. Shah and Nakahara \cite{ShahNakahara} show the monogeneity of certain imaginary index 2 subfields of cyclotomic fields. They also prove a criterion for non-monogeneity in Galois extensions based on the ramification and inertia of a small prime. Jung, Koo, and Shin \cite{JungKooShinWeierstrassUnits} use Weierstrass units to build relative monogenic generators for the composita of certain Ray class fields of imaginary quadratic fields. Motoda and Nakahara \cite{MotodaNakahara} show that if the Galois group of $L$ is elementary 2-abelian and $L$ has degree $\geq 16$, then $L$ is not monogenic over $\QQ$. They also establish partial results in the case that $[L:\QQ]=8$. Chang \cite{Chang} completely describes the monogeneity of the Kummer extension $K$ when $[K:\QQ]=6$. Ga\'{a}l and Remete \cite{GaalRemetenon} investigate $[K:\QQ]=8$. Though we do not outline it further here, there is a wealth of literature on monogenic abelian extensions of a fixed degree. The interested reader should consult the surveys mentioned earlier.

Ga\'{a}l, Remete, and Szab\'{o} \cite{MR3513574} study the relation between absolute monogeneity, i.e. monogeneity over $\QQ$, and relative monogeneity. Suppose $L$ is a number field, $\Ocal_L=\ZZ[\theta]$, and $R$ is an order of a subfield of $L$. They establish that $\theta$ can always be used to construct a power $R$-integral basis for $\Ocal_L$. Relative power integral bases are also studied in \cite{CubicRelative}, \cite{QuarticRelative}, \cite{GaalRemete0}, \cite{GaalRemeteMonoRelative}, and \cite{QuartQuadRelative}.

Radical extensions are also a classical object of study. In 1910, Westlund \cite{Westlund} computed the discriminant and an integral basis for the radical extensions $\QQ\left(\sqrt[p]{\alpha}\right)$ over $\QQ$, where $\alpha\in \ZZ$ and $p$ is a prime. Westlund also identified when $\sqrt[p]{\alpha}$ yields a power integral basis for $\QQ\left(\sqrt[p]{\alpha}\right)$. Using Dedekind's index criterion (Theorem \ref{Dedind}) and the Montes algorithm, Gassert \cite{Alden} gives necessary and sufficient conditions for the ring of integers of $\QQ\left(\sqrt[n]{\alpha}\right)$ to be $\ZZ\left[\sqrt[n]{\alpha}\right]$. Having $\sqrt[n]{\alpha}$ generate a power integral basis is dependent on the congruence 
\begin{equation}\label{Wieferich}
\alpha^p\equiv \alpha \bmod {p^2},
\end{equation}
where $p$ divides $n$. Loosely speaking, non-zero solutions to Congruence \eqref{Wieferich} are obstructions to $\sqrt[n]{\alpha}$ generating a power integral basis. A prime $p$ for which Congruence \eqref{Wieferich} has a solution is called a \emph{Wieferich prime\footnote{Wieferich \cite{Wieferich} studied these primes, with $\alpha=2$, in relation to Fermat's Last Theorem.} to the base $\alpha$}. See Keith Conrad's excellent expository note \cite{KCNoteRadical} for background on $\ZZ$-power bases of radical extensions and the history of Wieferich primes.

The monogeneity of radical extensions of a given degree has been studied extensively. The radical quartic case is investigated by Funakura, who finds infinitely many monogenic fields \cite{Funakura}. Ga\'{a}l and Remete \cite{GaalRemete}, characterize the only power integral bases of a number of infinite families of radical quartic fields using binomial Thue equations and extensive calculations on a supercomputer.  In \cite{AhmadNakaharaHameed} and \cite{AhmadNakaharaHusnineMono} degree six is studied. Degree eight is considered in \cite{HameedNakaharaBases} and \cite{HameedNakaharaHusnineAhmad}. Degree equal to a power of 2 is also studied in \cite{HameedNakaharaHusnineAhmad}. For degree $n$ with $3\leq n\leq 9$, Ga\'al and Remete \cite{GaalRemetePeriodic} establish a periodic characterization of integral bases.


One can also ask about the extent to which monogeneity can fail. First a few definitions:  A quantity related to monogeneity is the field index. The \emph{field index} is defined to be the pair-wise greatest common divisor $\gcd_{\alpha\in \Ocal_K} \left[\Ocal_K:\ZZ[\alpha]\right]$. Note that $K$ can have field index 1 and still not be monogenic; see Example \ref{gcdindices=1}. Define the \emph{minimal index} to be $\min_{\alpha\in \Ocal_K} \left[\Ocal_K:\ZZ[\alpha]\right]$. Monogeneity is equivalent to having minimal index equal to 1. An early result of Hall \cite{Hall} shows that there exist cubic fields with arbitrarily large minimal indices. In \cite{SpearmanYangYoo}, this is generalized to show that every cube-free integer occurs as the minimal index of infinitely many radical cubic fields. Monogeneity is equivalent to requiring exactly one ring generator; Pleasants \cite{Pleasants} shows that the number of generators needed for a field of degree $n$ is less than $\ln (n)/\ln(2)+1$, and, if 2 splits completely, the minimal number of generators is $\lfloor \ln(n)/\ln(2)+1\rfloor$.

\section{Background and Necessary Lemmas}\label{background}

\textbf{Notation:}  An overline always denotes reduction modulo a prime. A $\Delta$ denotes a discriminant, and a subscript on $\Delta$ indicates the object whose discriminant we are considering. A subscript is also used to indicate localization. For example, $\left(\Ocal_L\right)_{\gp}$ is $\Ocal_L$ localized at $\gp$. A choice of uniformizer is indicated by $\pi$ with the ideal of localization in the subscript. In the aforementioned context, $\pi_{\gp}$ is a uniformizer. The normalized valuation associated with a prime $\gp$ is denoted by $v_{\gp}$.

We start with some ideas of Dedekind based on work of Kummer. The following is often called Dedekind's criterion and first appeared in \cite{Dedekind}. Since we have two criteria due to Dedekind, we will call the following Dedekind's criterion for splitting. 

\begin{theorem}\label{DedCrit}
Let $f(x)\in \ZZ[x]$ be monic and irreducible, let $\theta$ be a root, and let $L=\QQ(\theta)$ be the number field generated by $\theta$. If $p\in \ZZ$ is a prime that does not divide $[\Ocal_L:\ZZ[\theta]]$, then the factorization of $p$ in $\Ocal_L$ mirrors the factorization of $f(x)$ modulo $p$. That is, if
\[f(x)\equiv \varphi_1(x)^{e_1}\cdots \varphi_r(x)^{e_r} \bmod p\]
is a factorization of $\overline{f(x)}$ into irreducibles in $\FF_p[x]$, then $p$ factors into primes in $\Ocal_L$ as 
\[p=\gp_1^{e_1}\cdots \gp_r^{e_r}.\]
Moreover, the residue class degree of $\gp_i$ is equal to the degree of $\varphi_i$.
\end{theorem}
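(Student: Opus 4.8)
The plan is to exploit the hypothesis $p \nmid [\Ocal_L : \ZZ[\theta]]$ to show that $\ZZ[\theta]$ and $\Ocal_L$ have isomorphic reductions modulo $p$, and then to read off the factorization of $(p)$ from the ring structure of this common quotient. First I would show that the inclusion $\ZZ[\theta] \hookrightarrow \Ocal_L$ induces an isomorphism $\ZZ[\theta]/p\ZZ[\theta] \xrightarrow{\sim} \Ocal_L/p\Ocal_L$. The quotient $Q := \Ocal_L/\ZZ[\theta]$ is a finite abelian group of order $[\Ocal_L:\ZZ[\theta]]$, which is coprime to $p$ by hypothesis. Tensoring the short exact sequence of $\ZZ$-modules
\[ 0 \to \ZZ[\theta] \to \Ocal_L \to Q \to 0 \]
with $\FF_p$ and passing to the associated long exact $\operatorname{Tor}$ sequence, the relevant terms $\operatorname{Tor}_1^{\ZZ}(\Ocal_L,\FF_p)$ (vanishing since $\Ocal_L$ is $\ZZ$-free), $\operatorname{Tor}_1^{\ZZ}(Q,\FF_p) \cong Q[p]$, and the cokernel $Q \otimes \FF_p \cong Q/pQ$ all vanish, because multiplication by $p$ is an automorphism of a finite group of order coprime to $p$. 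This yields the claimed isomorphism.

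Next, since $f$ is the monic minimal polynomial of $\theta$, we have $\ZZ[\theta] \cong \ZZ[x]/(f(x))$, and hence
\[ \Ocal_L/p\Ocal_L \;\cong\; \ZZ[\theta]/p\ZZ[\theta] \;\cong\; \FF_p[x]/\big(\overline{f}(x)\big). \]
I would then apply the Chinese Remainder Theorem to the factorization $\overline{f} = \varphi_1^{e_1}\cdots \varphi_r^{e_r}$ into powers of pairwise-coprime monic irreducibles, decomposing the right-hand side as $\prod_{i=1}^{r} \FF_p[x]/(\varphi_i^{e_i})$, a product of local Artinian $\FF_p$-algebras whose residue fields $\FF_p[x]/(\varphi_i)$ have degree $\deg\varphi_i$.

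On the arithmetic side, I would write the as-yet-unknown prime factorization as $p\Ocal_L = \gp_1^{a_1}\cdots \gp_s^{a_s}$ with the $\gp_i$ distinct, so that the Chinese Remainder Theorem again gives $\Ocal_L/p\Ocal_L \cong \prod_{i=1}^{s} \Ocal_L/\gp_i^{a_i}$, a product of local Artinian rings with residue fields $\Ocal_L/\gp_i$. The concluding step is to match the two decompositions. Since the decomposition of an Artinian ring as a finite product of local rings is unique up to reordering, there is a bijection pairing each $\gp_i$ with an irreducible factor and identifying $\Ocal_L/\gp_i^{a_i} \cong \FF_p[x]/(\varphi_j^{e_j})$. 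In particular $s = r$; the residue fields agree, so the residue class degree of $\gp_i$ equals $\deg\varphi_j$; and comparing $\FF_p$-dimensions in the matched local factors gives $a_i\cdot\deg\varphi_j = e_j\cdot\deg\varphi_j$, whence $a_i = e_j$. Reindexing produces exactly the asserted factorization.

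The step I expect to demand the most care is the passage from the abstract ring isomorphism $\Ocal_L/p\Ocal_L \cong \prod_i \FF_p[x]/(\varphi_i^{e_i})$ to the concrete statement about primes and exponents: namely, invoking uniqueness of the local decomposition of an Artinian ring and correctly extracting \emph{both} the residue degrees (from the residue fields) and the ramification indices (from a dimension count). It is worth stressing that the coprimality hypothesis is used essentially in the very first step; without it the reduction map need not be an isomorphism, and the conclusion can genuinely fail, which is exactly why the index condition cannot be dropped.
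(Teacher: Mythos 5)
The paper does not actually prove Theorem \ref{DedCrit}; it recalls it as background and refers the reader to Narkiewicz (Proposition 4.33 of \cite{Nark}) for an expository proof, so there is no in-paper argument to compare against. Your proof is correct and is essentially the standard textbook argument: the index hypothesis gives the ring isomorphism $\ZZ[\theta]/p\ZZ[\theta]\cong\Ocal_L/p\Ocal_L$ (your Tor computation is a clean way to see this, with the essential vanishing being $\operatorname{Tor}_1^{\ZZ}(Q,\FF_p)\cong Q[p]=0$ and $Q/pQ=0$), and then matching the two Chinese Remainder decompositions of $\FF_p[x]/(\overline{f})$ and $\Ocal_L/p\Ocal_L$ produces the primes, residue degrees, and ramification indices. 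The two steps you leave implicit --- uniqueness of the decomposition of a commutative Artinian ring into local factors, and the dimension count $\dim_{\FF_p}\Ocal_L/\gp_i^{a_i}=a_if_i$, which rests on $\gp_i^k/\gp_i^{k+1}\cong\Ocal_L/\gp_i$ in a Dedekind domain --- are standard and are invoked correctly, so there is no gap.
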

An expository proof can be found in many algebraic number theory texts. For example, see \cite[Proposition 4.33]{Nark}. We note there is a natural generalization to relative extensions of number fields. See \cite[Chapter I, Theorem 7.4]{Janusz}.

Using this criterion, Dedekind was the first to demonstrate a number field that was not monogenic. Dedekind considered the cubic field generated by a root of $x^3-x^2-2x-8$. He showed that the prime 2 splits completely. If there were a possible power integral basis, then one would be able to find a cubic polynomial, generating the same number field, that splits completely into distinct linear factors modulo 2. Since there are only two distinct linear polynomials in $\FF_2[x]$, this is impossible. Hence the number field cannot be monogenic. More generally, if a prime $p<n$ splits completely in an extension $L/\QQ$ of degree $n$, then $L$ is not monogenic. We will use the same strategy as Dedekind to construct non-monogenic fields. Hensel \cite{Hensel1894} built on these ideas to show the following.

\begin{theorem}\label{Hensel} Fix a prime $p$. The prime $p$ divides $\left[\Ocal_L:\ZZ[\theta]\right]$ for every algebraic integer $\theta$ generating $L$ over $\QQ$ if and only if there is an integer $f$ such that the number of prime ideal factors of $p\Ocal_L$ with inertia degree $f$ is greater than the number of monic irreducibles of degree $f$ in $\FF_p[x]$.
\end{theorem}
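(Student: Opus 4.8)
The plan is to reduce the whole question to a statement about the finite $\FF_p$-algebra $\Ocal_L/p\Ocal_L$: I claim that $p$ divides $[\Ocal_L:\ZZ[\theta]]$ for every generator $\theta$ of $L/\QQ$ if and only if $\Ocal_L/p\Ocal_L$ is \emph{not} monogenic as an $\FF_p$-algebra, and then translate that monogenicity into the numerical condition on residue degrees. The first link is a local--global observation: for $\theta\in\Ocal_L$ generating $L/\QQ$ the lattice $\ZZ[\theta]$ has full rank $n=[L:\QQ]$, and $p\nmid[\Ocal_L:\ZZ[\theta]]$ exactly when the reduction $\FF_p[x]/(\overline{f_\theta})\to\Ocal_L/p\Ocal_L$ (with $f_\theta$ the minimal polynomial of $\theta$) is an isomorphism, i.e. when $\FF_p[\bar\theta]=\Ocal_L/p\Ocal_L$; this is a surjection of $\FF_p$-vector spaces both of dimension $n$. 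Hence some generator $\theta$ achieves $p\nmid[\Ocal_L:\ZZ[\theta]]$ if and only if $\Ocal_L/p\Ocal_L$ is generated by a single element over $\FF_p$, up to the issue, addressed last, of upgrading an abstract generator of $\Ocal_L/p\Ocal_L$ to an honest primitive element of $L/\QQ$.

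Next I would analyze $\Ocal_L/p\Ocal_L$ structurally. Writing $p\Ocal_L=\prod_{i=1}^r\gp_i^{e_i}$ with $\gp_i$ of residue degree $f_i$, the Chinese Remainder Theorem gives $\Ocal_L/p\Ocal_L\cong\prod_{i=1}^r R_i$, where $R_i:=\Ocal_L/\gp_i^{e_i}$ is a local Artinian $\FF_p$-algebra with nilpotent maximal ideal $\gm_i$ and residue field $k_i\cong\FF_{p^{f_i}}$. The core computation is that $\prod_i R_i$ is monogenic over $\FF_p$ if and only if one can choose a generator $\bar\theta_i$ of each $R_i$ so that the minimal polynomials $\mu_i$ of the residues $\bar\theta_i\bmod\gm_i\in k_i$ are pairwise distinct. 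Indeed, a generator $\bar\theta=(\bar\theta_i)$ of the product must project to a generator of each $R_i$; and since each $R_i$ is local, the minimal polynomial of $\bar\theta_i$ is a power of the irreducible $\mu_i$, so the factors are separated in $\FF_p[\bar\theta]$ (the idempotents lie in $\FF_p[\bar\theta]$) precisely when the $\mu_i$ are pairwise coprime, i.e. pairwise distinct. As the generator of $R_i$ varies, $\mu_i$ ranges over \emph{all} monic irreducibles of degree $f_i$ in $\FF_p[x]$: its residue must generate $k_i=\FF_{p^{f_i}}$, and conversely any lift of a generator of $k_i$ generates $R_i$ by Nakayama, since $\gm_i$ is nilpotent. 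Because irreducibles of distinct degrees are automatically distinct, realizing pairwise-distinct $\mu_i$ amounts to assigning, within each degree $f$, distinct monic irreducibles to the $N_f:=\#\{i:f_i=f\}$ primes of residue degree $f$. By pigeonhole this is possible if and only if $N_f\le I_f$ for every $f$, where $I_f$ is the number of monic irreducibles of degree $f$ in $\FF_p[x]$. Taking the contrapositive gives exactly the theorem: no generator works for all choices if and only if $N_f>I_f$ for some $f$.

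The step I expect to be the main obstacle is the promotion of an abstract generator $\bar\theta\in\Ocal_L/p\Ocal_L$ to a genuine primitive element $\theta\in\Ocal_L$ with $\QQ(\theta)=L$. Fix any lift $\theta_0\in\Ocal_L$ of $\bar\theta$; every element of the coset $\theta_0+p\Ocal_L$ reduces to $\bar\theta$ and hence, by the first paragraph, has index prime to $p$ once it is primitive, so it suffices to locate a primitive element inside this coset. The non-primitive integers of $\Ocal_L$ lie in the union $\bigcup_{F}\Ocal_F$ over the finitely many proper subfields $\QQ\subseteq F\subsetneq L$, each $\Ocal_F$ a sublattice of rank $[F:\QQ]<n$. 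Since $\theta_0+p\Ocal_L$ is an affine lattice of full rank $n$, it cannot be covered by this finite union of lower-rank sublattices, so some $\theta$ in the coset generates $L/\QQ$, and this $\theta$ is the desired generator with $p\nmid[\Ocal_L:\ZZ[\theta]]$. I would also remark that the easy implication---if $N_f>I_f$ for some $f$ then $p$ is a common index divisor---can be read off directly from Dedekind's criterion for splitting (Theorem \ref{DedCrit}): for any $\theta$ with $p\nmid[\Ocal_L:\ZZ[\theta]]$ the distinct degree-$f$ irreducible factors of $\overline{f_\theta}$ biject with the primes of residue degree $f$, forcing $N_f\le I_f$.
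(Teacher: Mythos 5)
Your overall architecture is sound and is in fact the standard route to Hensel's theorem (the paper itself gives no proof, citing Hensel 1894, so there is no internal argument to compare against): reduce $p\nmid[\Ocal_L:\ZZ[\theta]]$ to $\FF_p[\bar\theta]=\Ocal_L/p\Ocal_L$, decompose $\Ocal_L/p\Ocal_L\cong\prod_i R_i$ by CRT, match primes to pairwise distinct monic irreducibles, and promote an abstract generator to a primitive element. Your local--global equivalence, the coset-covering argument for producing a primitive element, and the easy direction via Dedekind's splitting criterion are all correct.

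However, there is a genuine gap at the ramified primes. The claim that ``any lift of a generator of $k_i$ generates $R_i$ by Nakayama, since $\gm_i$ is nilpotent'' is false when $e_i>1$, and Nakayama cannot be invoked here: $\FF_p[\bar\theta_i]$ is a subring of $R_i$, not an $R_i$-submodule, so $R_i=\FF_p[\bar\theta_i]+\gm_i$ does not imply $R_i=\FF_p[\bar\theta_i]$. Concretely, take $L=\QQ(\sqrt{2})$ and $p=2$: then $R=\Ocal_L/2\Ocal_L\cong\FF_2[t]/(t^2)$ has residue field $\FF_2$, and the lift $\bar\theta=1$ of the generator $1\in\FF_2$ gives $\FF_2[\bar\theta]=\FF_2\subsetneq R$. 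The correct criterion is: $\bar\theta_i$ generates $R_i$ over $\FF_p$ if and only if its residue generates $k_i$ \emph{and}, when $e_i\geq 2$, $\mu_i(\bar\theta_i)$ is a uniformizer of $R_i$, i.e. $\mu_i(\bar\theta_i)\notin\gm_i^2$. (If $\mu_i(\bar\theta_i)\in\gm_i^2$, the image of $\FF_p[\bar\theta_i]$ in $R_i/\gm_i^2$ is isomorphic to $\FF_p[x]/(\mu_i)\cong k_i$, of dimension $f_i<2f_i$, so $\bar\theta_i$ cannot generate; conversely, if $u:=\mu_i(\bar\theta_i)$ generates $\gm_i$, then $\gm_i^j=u^jR_i\subseteq\FF_p[\bar\theta_i]+\gm_i^{j+1}$ gives generation by descending induction, since $\gm_i^{e_i}=0$.) Your key assertion---that every monic irreducible $\mu_i$ of degree $f_i$ is realized by \emph{some} generator of $R_i$---remains true, but needs this repair: lift a root of $\mu_i$ in $k_i$ to $\bar\theta_i\in R_i$, and if $\mu_i(\bar\theta_i)\in\gm_i^2$, replace $\bar\theta_i$ by $\bar\theta_i+\pi$ for a uniformizer $\pi$; separability of $\mu_i$ makes $\mu_i'(\bar\theta_i)$ a unit, so $\mu_i(\bar\theta_i+\pi)\equiv\mu_i'(\bar\theta_i)\pi\not\equiv 0\bmod\gm_i^2$. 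Note that the gap sits precisely in the hard direction of the theorem (the numerical condition holding for every $f$ implies some $\theta$ has index prime to $p$) and precisely at primes with $e_i>1$, which the statement does cover; with the corrected lemma in place of the Nakayama claim, your pigeonhole argument and the rest of the proof go through.
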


Any $p$ satisfying Theorem \ref{Hensel} is called an \emph{essential discriminant divisor}\footnote{Confusingly, essential discriminant divisors are sometimes called \emph{inessential discriminant divisors} in the literature. We prefer \emph{essential discriminant divisor} because, for a root of a polynomial to generate the number field in question, it is essential that $p$ divide the polynomial's discriminant.} or a \emph{common index divisor}. It turns out that essential discriminant divisors are not the only obstruction to monogeneity:

\begin{example}\label{gcdindices=1}\cite[Chapter 2.2.6]{Nark} Consider the number field given by $L = \QQ\left(\sqrt[3]{7\cdot 5^2}\right)=\QQ\left(\sqrt[3]{5\cdot 7^2}\right)$. The elements $\left\{1, \sqrt[3]{7\cdot 5^2}, \sqrt[3]{7^2\cdot 5}\right\}$ form an integral basis of $L$. For any fixed prime $p$, one can find $\theta\in \Ocal_L$ such that $\left[\Ocal_L:\ZZ[\theta]\right]$ is not divisible by $p$; however, $L$ is not monogenic.
\end{example}

We will use another criterion of Dedekind, which we'll call Dedekind's index criterion, to establish monogeneity. First, we state the version Dedekind proved, with $\QQ$ as the base field.

\begin{theorem}\label{Dedind} Let $f(x)$ be a monic, irreducible polynomial in $\ZZ[x]$, $\theta$ a root of $f$, and $L=\QQ(\theta)$. If $p$ is a rational prime, we have
\[f(x)\equiv \prod_{i=1}^rf_i(x)^{e_i} \bmod p,\]
where the $f_i(x)$ are monic lifts of the irreducible factors of $\overline{f(x)}$ to $\ZZ[x]$. Define
\[d(x):=\dfrac{f(x)-\prod\limits_{i=1}^r f_i(x)^{e_i}}{p}.  \]
Then $p$ divides $\left[\Ocal_L:\ZZ[\theta]\right]$ if and only if $\gcd\left(\overline{f_i(x)}^{e_i-1},\overline{d(x)}\right)\neq 1$ for some $i$, where we are taking the greatest common divisor in $\FF_p[x]$.
\end{theorem}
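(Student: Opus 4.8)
The plan is to recast the divisibility $p \mid [\Ocal_L : \ZZ[\theta]]$ as the failure of $\ZZ[\theta]$ to be $p$-maximal, and then to localize the problem one prime of $\Ocal_L$ at a time. Writing $A := \ZZ[\theta]$, recall that $p \nmid [\Ocal_L : A]$ exactly when $A \otimes_\ZZ \ZZ_p = \Ocal_L \otimes_\ZZ \ZZ_p$, where $\ZZ_p$ denotes the $p$-adic integers. Since $A \otimes_\ZZ \ZZ_p = \ZZ_p[x]/(f)$ and the factors $\overline{f_i}^{\,e_i}$ of $\overline{f}$ are pairwise coprime in $\FF_p[x]$, Hensel's lemma lifts this to a factorization $f = \prod_i G_i$ over $\ZZ_p$ with $\overline{G_i} = \overline{f_i}^{\,e_i}$, giving
\[ A \otimes_\ZZ \ZZ_p \;\cong\; \prod_i B_i, \qquad B_i := \ZZ_p[x]/(G_i). \]
Each $B_i$ is a reduced (because $f$ is separable), one-dimensional, local ring with maximal ideal $\gm_i = (p,\, f_i(\theta))$ and residue field $\FF_q := \FF_p[x]/(\overline{f_i})$. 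Thus $p \nmid [\Ocal_L : A]$ if and only if every $B_i$ is integrally closed, i.e.\ is a discrete valuation ring.

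I would then reduce the DVR test to a dimension count: a reduced, one-dimensional, Noetherian local ring is a DVR precisely when $\dim_{\FF_q} \gm_i/\gm_i^2 = 1$. Since $\gm_i$ is generated by $p$ and $u := f_i(\theta)$, the space $\gm_i/\gm_i^2$ is spanned by the two residues $\overline{p}$ and $\overline{u}$, so $B_i$ is a DVR iff these are linearly dependent over $\FF_q$. The engine driving the computation is the relation obtained from $f = \prod_i f_i^{e_i} + p\,d$ evaluated at $\theta$: in $B_i$ the factors $f_j(\theta)$ with $j \neq i$ are units, so
\[ u^{e_i} \;=\; -\,\varepsilon\, p\, d(\theta), \qquad \varepsilon \in B_i^\times. \]
When $e_i \geq 2$ this forces $p\,d(\theta) = -\varepsilon^{-1} u^{e_i} \in \gm_i^2$.

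From here the two directions fall out. If $\overline{f_i} \nmid \overline{d}$ then $d(\theta)$ is a unit, so $\overline{p} = 0$ in $\gm_i/\gm_i^2$; hence $\gm_i$ is generated by $u$ alone, the dimension is $1$, and $B_i$ is maximal. Conversely, if $\overline{f_i} \mid \overline{d}$ (and $e_i \geq 2$), I would assume $B_i$ were a DVR and derive a contradiction from valuations: writing $v$ for the normalized valuation of the hypothetical DVR $B_i$, comparing $\FF_p$-dimensions gives $v(p) = e_i$, while $d(\theta) \in \gm_i$ together with the displayed relation forces $e_i\, v(u) = e_i + v(d(\theta)) \geq e_i + 1$, so $v(u) \geq 2$; then both $\overline{p}$ and $\overline{u}$ vanish in $\gm_i/\gm_i^2$, making it zero-dimensional, which is absurd. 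Hence $B_i$ is not maximal. Finally I would reassemble: $p \mid [\Ocal_L : A]$ iff some $B_i$ fails to be a DVR iff some $i$ has $e_i \geq 2$ and $\overline{f_i} \mid \overline{d}$. Because $\overline{f_i}^{\,e_i - 1} = 1$ when $e_i = 1$, and $\gcd(\overline{f_i}^{\,e_i-1}, \overline{d}) \neq 1 \iff \overline{f_i}\mid\overline{d}$ when $e_i \geq 2$, this is exactly the stated gcd criterion.

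The main obstacle is the converse local step, showing $\overline{f_i}\mid\overline{d}$ implies non-maximality, because it must cover the case where $G_i$ is reducible over $\QQ_p$ (so that $B_i$ is local but not a domain); the valuation contradiction above is what uniformly rules out the DVR conclusion. A secondary technical point is justifying the reduction in the first paragraph, namely that non-$p$-maximality localizes and that $B_i$ is genuinely local with the stated maximal ideal, which rests on standard facts about orders over the $p$-adics and the lifting of idempotents.
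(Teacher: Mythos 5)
The paper never proves this statement: Theorem \ref{Dedind} is quoted as classical background due to Dedekind, and the paper immediately moves on to the relative generalization (Theorem \ref{Dedekindindex}) of Kumar--Khanduja, which it also cites rather than proves. So there is no proof in the paper to compare against, and your proposal must stand on its own. Its architecture is sound and is a standard modern route: identify $p\mid[\Ocal_L:\ZZ[\theta]]$ with failure of $p$-maximality, decompose $\ZZ_p[x]/(f)\cong\prod_i B_i$ via Hensel lifting of the pairwise-coprime factors $\overline{f_i}^{\,e_i}$, and test each local factor $B_i$ (Noetherian, reduced, one-dimensional, local with $\gm_i=(p,u)$, $u=f_i(\theta)$) for being a DVR via the embedding dimension, using the relation $u^{e_i}=-\varepsilon\, p\, d(\theta)$ with $\varepsilon\in B_i^\times$ as the engine. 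The individual steps you carry out are correct, including the valuation-theoretic contradiction in the converse.

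There is, however, one genuine (if easily repaired) gap in the reassembly. You prove: (a) $\overline{f_i}\nmid\overline{d}$ implies $B_i$ is a DVR, and (b) $e_i\geq 2$ together with $\overline{f_i}\mid\overline{d}$ implies $B_i$ is not a DVR. From these you assert ``some $B_i$ fails to be a DVR iff some $i$ has $e_i\geq 2$ and $\overline{f_i}\mid\overline{d}$.'' The ``only if'' direction of that sentence does not follow from (a) and (b): if some $B_i$ fails to be a DVR, then (a) gives $\overline{f_i}\mid\overline{d}$, but nothing you wrote rules out that this happens for an index with $e_i=1$, in which case $\gcd\left(\overline{f_i}^{\,e_i-1},\overline{d}\right)=\gcd\left(1,\overline{d}\right)=1$ and the theorem's criterion would be contradicted. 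You need the third case: if $e_i=1$, then $B_i$ is a DVR regardless of whether $\overline{f_i}$ divides $\overline{d}$. Fortunately your own relation supplies it in one line: when $e_i=1$ it reads $u=-\varepsilon\, p\, d(\theta)\in pB_i$, so $\gm_i=(p,u)=(p)$ is principal and $B_i$ is a DVR (equivalently, $B_i/pB_i=\FF_p[x]/(\overline{f_i})$ is already a field, so $\gm_i=pB_i$). A related cosmetic point: in your forward direction, the claim ``$\overline{p}=0$ in $\gm_i/\gm_i^2$'' is literally correct only when $e_i\geq 2$; when $e_i=1$ one instead gets $p\in(u)$, which still yields $\gm_i=(u)$, so the conclusion survives. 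With the missing case added, the proof is complete.
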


Recently, Kumar and Khanduja, using completely different methods from those of Dedekind, have proved a generalization of Dedekind's index criterion to relative extensions. This generalization will be very useful to us.

\begin{theorem} \cite[Theorem 1.1]{KandK}\label{Dedekindindex}
Let $R$ be a Dedekind domain with quotient field $L$, and let $f(x)$ be a monic, irreducible polynomial in $R[x]$ with $\theta$ a root. Define $M=L(\theta)$, and suppose $\gp$ is a prime of $R$. We have 
\[f(x)\equiv \prod_{i=1}^r f_i(x)^{e_i} \bmod \gp,\]
where the $f_i(x)$ are monic lifts of the irreducible factors of $\overline{f(x)}$ to $R[x]$. Note the integral closure of $R_\gp$ in $M$ is $\left(\Ocal_M\right)_\gp$. Define the polynomial $d(x)\in R_\gp[x]$ to be
\[d(x):=\dfrac{f(x)-\prod\limits_{i=1}^r f_i(x)^{e_i}}{\pi_{\gp}}.\]
Then $\left(\Ocal_M\right)_\gp=R_\gp[\theta]$ if and only if $\overline{f_i(x)}^{e_i-1}$ is coprime to $\overline{d(x)}$ for each $i$.
\end{theorem}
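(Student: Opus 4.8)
The plan is to reduce to a purely local question over a discrete valuation ring and then decide regularity of each local ring one prime at a time. First I would localize at $\gp$: set $A:=R_\gp$, a DVR with uniformizer $\pi:=\pi_\gp$ and residue field $\kappa:=R/\gp$, and write $B:=(\Ocal_M)_\gp$ for the integral closure of $A$ in $M$. Since $f$ is irreducible over $L$, the ring $A[\theta]\cong A[x]/(f)$ is a one-dimensional Noetherian domain, free of rank $\deg f$ over $A$; being a hypersurface in the regular ring $A[x]$ it is Cohen--Macaulay, hence satisfies Serre's condition $S_2$, so by Serre's normality criterion $A[\theta]=B$ if and only if $A[\theta]$ is regular in codimension one (condition $R_1$). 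Inverting $\pi$ turns $A[\theta]$ into the field $M$, so every nonzero prime of $A[\theta]$ lies over $\gp$; these maximal ideals are exactly the preimages $\gM_i:=\bigl(\pi,\,f_i(\theta)\bigr)$ of the factors $\bar f_i$ under $A[\theta]\to A[\theta]/\pi A[\theta]=\kappa[x]/(\bar f)$, with residue field $\kappa_i:=\kappa[x]/(\bar f_i)$. Hence $A[\theta]=B$ if and only if each local ring $A[\theta]_{\gM_i}$ is a DVR, and I will check this factor by factor.

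The engine of the whole argument is the identity obtained from $f(\theta)=0$ together with $f=\prod_j f_j^{e_j}+\pi d$, namely
\[
\prod_{j} f_j(\theta)^{e_j} \;=\; -\,\pi\, d(\theta).
\]
Fix $i$ and localize at $\gM_i$. Because $\bar f_j$ is coprime to $\bar f_i$ for $j\neq i$, each $f_j(\theta)$ with $j\neq i$ lies outside $\gM_i$ and is therefore a unit in $A[\theta]_{\gM_i}$, so the identity becomes
\[
f_i(\theta)^{e_i}\, u \;=\; -\,\pi\, d(\theta), \qquad u\in A[\theta]_{\gM_i}^{\times}.
\]
The two easy directions fall out immediately. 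If $\bar f_i\nmid\bar d$ then $d(\theta)\notin\gM_i$ is a unit, whence $\pi\in\bigl(f_i(\theta)\bigr)$ and $\gM_i=\bigl(f_i(\theta)\bigr)$ is principal; and if $e_i=1$ then $f_i(\theta)\in(\pi)$, so $\gM_i=(\pi)$ is principal. In either case $A[\theta]_{\gM_i}$ is a DVR, matching the fact that $\gcd\bigl(\bar f_i^{\,e_i-1},\bar d\bigr)=1$ precisely when $e_i=1$ or $\bar f_i\nmid\bar d$.

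The hard part is the converse: I must show that when $e_i\geq 2$ and $\bar f_i\mid\bar d$ the local ring $\Ocal:=A[\theta]_{\gM_i}$ fails to be a DVR, so that $\gcd\bigl(\bar f_i^{\,e_i-1},\bar d\bigr)\neq1$ genuinely obstructs $A[\theta]=B$. My tool here is a length count. Reducing mod $\pi$ and applying the Chinese Remainder Theorem gives $\Ocal/\pi\Ocal\cong \kappa[x]_{(\bar f_i)}/\bigl(\bar f_i^{\,e_i}\bigr)$, a local Artinian ring whose $\Ocal$-length is exactly $e_i$ (the chain of powers of $\bar f_i$ has $e_i$ simple subquotients). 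Suppose, for contradiction, that $\Ocal$ were a DVR with valuation $v$. Then on one hand $v(\pi)=\mathrm{length}_\Ocal(\Ocal/\pi\Ocal)=e_i\geq 2$; since $\gM_i=\bigl(\pi,f_i(\theta)\bigr)$ equals the ideal generated by whichever of $\pi,f_i(\theta)$ has the smaller value, this forces $v\bigl(f_i(\theta)\bigr)=1$. On the other hand, applying $v$ to $f_i(\theta)^{e_i}u=-\pi d(\theta)$ gives $e_i=v(\pi)+v\bigl(d(\theta)\bigr)=e_i+v\bigl(d(\theta)\bigr)$, so $v\bigl(d(\theta)\bigr)=0$ and $d(\theta)$ is a unit, i.e. $\bar f_i\nmid\bar d$, contradicting the hypothesis. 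Therefore $\Ocal$ is not a DVR.

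Assembling the three cases over all $i$ yields the claimed equivalence. I expect the only delicate points to be the justification of the reduction to regularity in codimension one (the Serre/Cohen--Macaulay step, or alternatively the elementary observation that $B/A[\theta]$ is a $\pi$-torsion module, so equality may be tested after localizing at each $\gM_i$) and the precise length computation for $\Ocal/\pi\Ocal$; once these are in place, the valuation bookkeeping that produces the contradiction is short.
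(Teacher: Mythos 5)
Your proof is correct, but there is a wrinkle in the comparison: the paper itself offers no proof of this statement. Theorem \ref{Dedekindindex} is quoted from Kumar and Khanduja \cite{KandK}, and the paper only remarks that their methods are ``completely different'' from Dedekind's (their published argument is valuation-theoretic, working with the prolongations of $v_{\gp}$ to $M$). So your argument stands as an independent derivation, and it checks out. The reduction is right: $A[\theta]\cong A[x]/(f)$ is a one-dimensional Noetherian domain with fraction field $M$, so $R_\gp[\theta]=\left(\Ocal_M\right)_\gp$ exactly when $A[\theta]$ is integrally closed, and this can be tested at the finitely many maximal ideals $\gM_i=\left(\pi,f_i(\theta)\right)$, all of which contain $\pi$. (Serre's criterion works but is overkill: normality of a Noetherian domain localizes, and a one-dimensional Noetherian local domain is normal if and only if it is a DVR, so the Cohen--Macaulay/$S_2$ input is never really needed.) The identity $\prod_j f_j(\theta)^{e_j}=-\pi d(\theta)$, the fact that $f_j(\theta)$ is a unit in $A[\theta]_{\gM_i}$ for $j\neq i$, the principality of $\gM_i$ when $e_i=1$ or $\overline{f_i}\nmid\overline{d}$, the computation that $A[\theta]_{\gM_i}/\pi A[\theta]_{\gM_i}$ has length $e_i$, and the valuation bookkeeping that forces $\overline{f_i}\nmid\overline{d}$ in the DVR case are all sound. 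The one step you leave implicit is that $d(\theta)\neq 0$ in the hard case, so that $v\bigl(d(\theta)\bigr)$ is finite; this is automatic, since $e_i\geq 2$ forces $\deg f_i<\deg f$, hence $f_i(\theta)\neq 0$, and then the identity itself yields $d(\theta)\neq 0$. As for what each approach buys: your route is self-contained commutative algebra, it nowhere uses separability of $M/L$ or module-finiteness of $\left(\Ocal_M\right)_\gp$ over $R_\gp$ (everything is a statement about $A[\theta]$ alone), and the analysis of which $\gM_i$ are principal, together with their residue fields $\kappa_i$, essentially recovers the relative splitting statement that the paper invokes separately (via Janusz) to obtain Porism \ref{RadicalSplitting}; the Kumar--Khanduja approach, by contrast, fits into a larger valuation-theoretic framework that they exploit for further generalizations.
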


With Equation \eqref{polyfieldgeneral}, we will see that the conclusion of Theorem \ref{Dedekindindex} is exactly what we need to study $\left[\Ocal_M:\Ocal_L[\theta]\right]$. The interested reader should consult \cite{3Criteria} for a nice discussion of and comparison between three different criteria for monogeneity:  Dedekind's index criterion, a theorem of Uchida \cite{Uchida}, and a theorem of L\"uneburg \cite{Luneburg}. For other, similar generalizations of Dedekind's index criterion see \cite{CharkaniDeajim}, \cite{Ershov}, and, for the greatest generality, \cite{ElFadilBoulagouazDeajim}.

In addition to the work of Dedekind, we will need a few facts about cyclotomic, radical, and Kummer extensions. First, we recall the following well-known formula relating polynomial discriminants and field discriminants. Let $f$ be a monic, irreducible polynomial of degree $n>1$, let $\theta$ be a root, and write $L=\QQ(\theta)$, then
\begin{equation}\label{polyfield}
\Delta_f=\Delta_L[\Ocal_L:\ZZ[\theta]]^2.
\end{equation}

Equation \eqref{polyfield} admits a generalization to relative extensions. We will specialize \cite[Chapter III, Equation 2.4]{FrohlichTaylor} for our purposes. Let $L$ be a number field, and let $M$ be a finite extension of $L$ generated by a root, $\theta$, of a monic, irreducible polynomial $f(x)\in \Ocal_L[x]$. As ideals, we have the equality

\begin{equation}\label{polyfieldgeneral}
\Delta_f = \Delta_{M/L}\left[\Ocal_M:\Ocal_L[\theta]\right]^2.
\end{equation}
Thus, in studying monogeneity, we need only consider the prime factors of $\Delta_f$.

Suppose $M$ and $N$ are two finite extensions of a number field $L$. We call $M$ and $N$ \emph{arithmetically disjoint} (over $L$) if they are linearly disjoint and, as ideals, $\gcd\left(\Delta_{M/L},\Delta_{N/L}\right)=\Ocal_L$. The following is Proposition III.2.13 of \cite{FrohlichTaylor}.

\begin{proposition}\label{arithdisjoint}
If $M$ and $N$ are arithmetically disjoint over $L$, then $\Ocal_{MN}=\Ocal_M\cdot\Ocal_N$ as $\Ocal_L$-modules. 
\end{proposition}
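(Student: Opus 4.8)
The plan is to prove the nontrivial inclusion $\Ocal_{MN}\subseteq \Ocal_M\cdot\Ocal_N$ by localizing at each prime $\gp$ of $\Ocal_L$; the reverse containment $\Ocal_M\cdot\Ocal_N\subseteq\Ocal_{MN}$ is automatic since both $\Ocal_M$ and $\Ocal_N$ sit inside $\Ocal_{MN}$. Since $\Ocal_{MN}$ and $\Ocal_M\cdot\Ocal_N$ are finitely generated $\Ocal_L$-modules with the latter a submodule of the former, it suffices to show $(\Ocal_{MN})_\gp=(\Ocal_M)_\gp\cdot(\Ocal_N)_\gp$ for every $\gp$, as localization commutes with the module product. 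Writing $R:=(\Ocal_L)_\gp$, a DVR, all three localized rings of integers are torsion-free finitely generated $R$-modules, hence free. The coprimality hypothesis $\gcd(\Delta_{M/L},\Delta_{N/L})=\Ocal_L$ guarantees that at each $\gp$ at most one relative discriminant is divisible by $\gp$; without loss of generality assume $\gp\nmid\Delta_{N/L}$.

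First I would choose a free $R$-basis $\omega_1,\dots,\omega_n$ of $(\Ocal_N)_\gp$ (with $n=[N:L]$) and any free $R$-basis $\eta_1,\dots,\eta_m$ of $(\Ocal_M)_\gp$ (with $m=[M:L]$). Because $\gp\nmid\Delta_{N/L}$, the localized relative discriminant ideal is all of $R$, so $\disc_{N/L}(\omega_1,\dots,\omega_n)$ is a unit in $R$. Linear disjointness gives $[MN:L]=mn$ and makes the $mn$ products $\eta_i\omega_j$ an $L$-basis of $MN$; I would show these form an $R$-basis of $(\Ocal_{MN})_\gp$, which yields the claim. Take $\gamma\in(\Ocal_{MN})_\gp$ and expand $\gamma=\sum_j\beta_j\omega_j$ with $\beta_j\in M$, using that $\{\omega_j\}$ is also an $M$-basis of $MN$. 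Pairing against the trace form of $MN/M$ produces the linear system
\begin{equation*}
\operatorname{Tr}_{MN/M}(\gamma\,\omega_k)=\sum_j \operatorname{Tr}_{MN/M}(\omega_j\omega_k)\,\beta_j,\qquad k=1,\dots,n.
\end{equation*}
The Gram matrix $\bigl(\operatorname{Tr}_{MN/M}(\omega_j\omega_k)\bigr)$ has entries in $R$ and, crucially, unit determinant (justified below), while each $\operatorname{Tr}_{MN/M}(\gamma\,\omega_k)$ lies in $(\Ocal_M)_\gp$ since $\gamma\omega_k$ is integral. Inverting the Gram matrix over $R$ forces $\beta_j\in(\Ocal_M)_\gp$; expanding each $\beta_j$ in the basis $\{\eta_i\}$ then shows $\gamma$ is an $R$-combination of the $\eta_i\omega_j$, giving $(\Ocal_{MN})_\gp\subseteq(\Ocal_M)_\gp\cdot(\Ocal_N)_\gp$.

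The hard part will be justifying that the Gram matrix $\bigl(\operatorname{Tr}_{MN/M}(\omega_j\omega_k)\bigr)$ has unit determinant, which is exactly where the structure of arithmetically disjoint fields enters. The key is the trace-compatibility identity $\operatorname{Tr}_{MN/M}(x)=\operatorname{Tr}_{N/L}(x)$ for $x\in N$. Since $\{\omega_j\}$ is simultaneously an $L$-basis of $N$ and, by linear disjointness, an $M$-basis of $MN$, for any $x\in N$ the relations $x\omega_j=\sum_k a_{jk}\omega_k$ have coefficients $a_{jk}\in L$ and hold verbatim in $MN$; thus the multiplication-by-$x$ matrices for $N/L$ and for $MN/M$ coincide, and so do their traces. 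Taking $x=\omega_j\omega_k$ yields $\det\bigl(\operatorname{Tr}_{MN/M}(\omega_j\omega_k)\bigr)=\disc_{N/L}(\omega_1,\dots,\omega_n)$, which we arranged to be a unit in $R$. With this identity the linear system above is solvable over $R$, giving the local equality; collecting these over all primes $\gp$ of $\Ocal_L$ yields $\Ocal_{MN}=\Ocal_M\cdot\Ocal_N$.
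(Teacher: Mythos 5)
Your proof is correct. Note, however, that the paper does not prove this proposition at all: it simply quotes it as Proposition III.2.13 of Fr\"ohlich--Taylor, so there is no internal argument to compare against. What you have written is a complete, self-contained proof of that classical fact, and it is essentially the standard one (and, in substance, the one in the cited reference): reduce to the localizations $(\Ocal_L)_\gp$, use the coprimality $\gcd\left(\Delta_{M/L},\Delta_{N/L}\right)=\Ocal_L$ to arrange that, say, $\disc_{N/L}(\omega_1,\dots,\omega_n)$ is a unit locally, and then solve the trace-form linear system to pull an arbitrary $\gamma\in(\Ocal_{MN})_\gp$ into $(\Ocal_M)_\gp\cdot(\Ocal_N)_\gp$. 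The two hypotheses enter exactly where they should: linear disjointness is what makes $\{\omega_j\}$ an $M$-basis of $MN$ and justifies the key identity $\operatorname{Tr}_{MN/M}(x)=\operatorname{Tr}_{N/L}(x)$ for $x\in N$ (your multiplication-matrix argument for this is correct and is the one subtle point of the whole proof), while discriminant coprimality is what makes the Gram determinant invertible over the local ring. The only practical difference between your route and the paper's is economy: the paper buys brevity by outsourcing the proof, whereas your version makes the mechanism visible---in particular it shows the conclusion only needs, at each prime, that \emph{one} of the two discriminants is prime to $\gp$, which is precisely how arithmetic disjointness is used.
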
 

Proposition \ref{arithdisjoint} will be useful in studying the monogeneity of relative cyclotomic extensions. 

Turning to cyclotomic extensions of $\QQ$, the following is Lemma 6 of Chapter III of \cite{CasselsFrohlich}.

\begin{lemma}\label{cyclodisc}
The discriminant of $\QQ\left(\zeta_n\right)$ over $\QQ$ is
\[\Delta_{\QQ\left(\zeta_n\right)/\QQ}=n^{\phi(n)}\bigg/\prod_{p\mid n}p^\frac{\phi(n)}{p-1}, \] where $\phi$ denotes Euler's phi function. 
Further, an integral basis for $\Ocal_{\QQ\left(\zeta_n\right)}$ is given by 1 and the powers $\zeta_n^k$ with $1\leq k\leq\phi(n)-1$.
\end{lemma}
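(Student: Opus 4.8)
The plan is to prove both assertions together by reducing to prime-power conductor and then gluing with arithmetic disjointness. Write $n=\prod_i p_i^{k_i}$ with the $p_i$ distinct primes; since we may assume $n\not\equiv 2\bmod 4$, each $p_i^{k_i}>2$. First I would establish the lemma for a single prime power $p^k$, and then combine the fields $\QQ(\zeta_{p_i^{k_i}})$ using Proposition \ref{arithdisjoint}.

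For the prime-power case set $\zeta=\zeta_{p^k}$ and let $f=\Phi_{p^k}$ be its minimal polynomial, of degree $\phi(p^k)=p^{k-1}(p-1)$. I would compute the polynomial discriminant from $\Delta_f=\pm N_{\QQ(\zeta)/\QQ}\big(f'(\zeta)\big)$. Differentiating the identity $f(x)\big(x^{p^{k-1}}-1\big)=x^{p^k}-1$ and evaluating at $\zeta$ (where $f(\zeta)=0$) gives $f'(\zeta)=p^k\zeta^{p^k-1}\big/\big(\zeta^{p^{k-1}}-1\big)$. Taking norms, using $N_{\QQ(\zeta)/\QQ}(p)=p^{\phi(p^k)}$ and that the norm of $\zeta^{p^{k-1}}-1$ is $\pm p^{\,p^{k-1}}$ (since $\zeta^{p^{k-1}}$ is a primitive $p$-th root of unity and $\QQ(\zeta_p)\subseteq\QQ(\zeta)$ has relative degree $p^{k-1}$), yields $|\Delta_f|=p^{\,p^{k-1}(k(p-1)-1)}$. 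In particular $\Delta_f$ is a power of $p$, so by Equation \eqref{polyfield} the index $\big[\Ocal_{\QQ(\zeta)}:\ZZ[\zeta]\big]$ is also a power of $p$.

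To pin the index down to $1$ I would show $\ZZ[\zeta]$ is maximal at $p$. The substitution $x\mapsto x+1$ makes $f$ Eisenstein at $p$: one has $f(x+1)\equiv x^{\phi(p^k)}\bmod p$ because $\Phi_{p^k}(x)\equiv (x-1)^{\phi(p^k)}\bmod p$, and the constant term $f(1)=\Phi_{p^k}(1)=p$ is not divisible by $p^2$. Hence $1-\zeta$ is a uniformizer at the unique prime $\gp$ above $p$, the prime $p$ is totally ramified, and the standard local argument for Eisenstein generators shows $\ZZ[\zeta]$ is maximal at $p$. Thus $p$ does not divide the index, so the index equals $1$; this gives $\Ocal_{\QQ(\zeta)}=\ZZ[\zeta]$ and $\Delta_{\QQ(\zeta_{p^k})/\QQ}=\Delta_f$, proving the lemma for prime powers.

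For general $n$, the fields $\QQ(\zeta_{p_i^{k_i}})$ are pairwise linearly disjoint over $\QQ$, since $[\QQ(\zeta_n):\QQ]=\phi(n)=\prod_i\phi(p_i^{k_i})$ by multiplicativity of $\phi$, and their discriminants are powers of the distinct primes $p_i$, hence coprime; so they are arithmetically disjoint. Applying Proposition \ref{arithdisjoint} inductively gives $\Ocal_{\QQ(\zeta_n)}=\prod_i\ZZ[\zeta_{p_i^{k_i}}]=\ZZ[\zeta_n]$, the final equality because $\zeta_n$ and the $\zeta_{p_i^{k_i}}$ are mutual powers of one another (Chinese Remainder Theorem). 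The discriminant formula then follows from the compositum rule $\Delta_{MN/\QQ}=\Delta_{M/\QQ}^{[N:\QQ]}\Delta_{N/\QQ}^{[M:\QQ]}$ for arithmetically disjoint $M,N$, applied iteratively: the exponent of $p_i$ comes out to $k_i\phi(n)-\phi(n)/(p_i-1)$, which matches $n^{\phi(n)}\big/\prod_{p\mid n}p^{\phi(n)/(p-1)}$. I expect the main obstacle to be the prime-power case — specifically the norm evaluation of $f'(\zeta)$ and the local maximality of the Eisenstein order at $p$; once these are secured, Proposition \ref{arithdisjoint} makes the gluing routine.
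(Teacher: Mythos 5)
Your proof is correct. Note, however, that the paper does not prove this lemma at all: it is quoted verbatim from the literature (Lemma 6 of Chapter III of \cite{CasselsFrohlich}), so there is no internal argument to compare against. What you have written is the standard self-contained proof (the one found in Washington or Marcus): the prime-power case via the norm of $f'(\zeta)$ together with the Eisenstein shift $x\mapsto x+1$ to get maximality of $\ZZ[\zeta_{p^k}]$ at $p$, followed by gluing the prime-power pieces. A pleasant feature of your write-up is that the gluing step is exactly an application of the paper's own Proposition \ref{arithdisjoint} (Fr\"ohlich--Taylor III.2.13), so your argument meshes well with the paper's toolkit; the inductive step is coherent because at each stage you obtain both the ring of integers and the discriminant of the partial compositum, which is what feeds the arithmetic-disjointness hypothesis at the next stage. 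Two small points worth flagging: you silently pass over signs (the actual discriminant carries a sign $(-1)^{\phi(n)/2}$, e.g.\ $\Delta_{\QQ(\zeta_3)}=-3$), but the lemma as stated in the paper also omits the sign, so this is consistent with the required level of precision; and your reduction to $n\not\equiv 2\bmod 4$ matches the paper's stated convention, though one should note the formula and the power basis do persist for $n\equiv 2 \bmod 4$ since $\ZZ[\zeta_n]=\ZZ[\zeta_{n/2}]$ in that case. In short: the citation buys the paper brevity; your proof buys self-containedness, at the cost of invoking two standard facts (Eisenstein generators are maximal at $p$, and the discriminant compositum rule $\Delta_{MN}=\Delta_M^{[N:\QQ]}\Delta_N^{[M:\QQ]}$ for arithmetically disjoint fields) whose proofs you would need to include for a fully complete account.
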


Lemma \ref{cyclodisc} and Equation \eqref{polyfield} yield the following corollary.

\begin{corollary}\label{cyclopolydisc}
The cyclotomic polynomial $\phi_n(x)$ has discriminant
\[\Delta_{\phi_n}=n^{\phi(n)}\bigg/\prod_{p\mid n}p^\frac{\phi(n)}{p-1}.\]
\end{corollary}

It is useful to understand the splitting of primes in cyclotomic extensions. 

\begin{lemma}\label{cyclosplit} \cite[III.1 Lemma 4]{CasselsFrohlich}:  If $p$ is a prime not dividing $n$, then it is unramified in $\QQ\left(\zeta_n\right)$ and its residue class degree is the least positive integer $f$ such that $p^f\equiv 1 \bmod n$.
\end{lemma}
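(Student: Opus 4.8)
The plan is to reduce everything to the factorization of the cyclotomic polynomial modulo $p$ via Dedekind's criterion for splitting. By Lemma \ref{cyclodisc} we have $\Ocal_{\QQ(\zeta_n)} = \ZZ[\zeta_n]$, so the index $[\Ocal_{\QQ(\zeta_n)}:\ZZ[\zeta_n]]$ equals $1$ and is divisible by no prime. Hence Theorem \ref{DedCrit} applies to $\phi_n(x)$ for \emph{every} rational prime $p$: the factorization of $p$ in $\Ocal_{\QQ(\zeta_n)}$ mirrors the factorization of $\overline{\phi_n(x)}$ in $\FF_p[x]$, with residue class degrees equal to the degrees of the corresponding irreducible factors.

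First I would establish that $p$ is unramified. Since $p \nmid n$, the reduction $\overline{x^n - 1}$ is separable in $\FF_p[x]$: its derivative $\overline{nx^{n-1}}$ is nonzero (as $\overline{n}\neq 0$) and shares no root with $\overline{x^n-1}$ in $\overline{\FF_p}$, since $0$ is not a root of $x^n-1$. As $\phi_n(x)$ divides $x^n - 1$ in $\ZZ[x]$, the reduction $\overline{\phi_n(x)}$ is also separable, hence factors into \emph{distinct} irreducibles. By Theorem \ref{DedCrit} the prime $p$ therefore factors with all exponents equal to $1$; that is, $p$ is unramified. (Alternatively, by Corollary \ref{cyclopolydisc} the prime $p$ does not divide $\Delta_{\phi_n}$, giving unramifiedness directly.)

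The crux is to identify the common degree $f$ of the irreducible factors of $\overline{\phi_n(x)}$. The roots of $\overline{\phi_n(x)}$ in $\overline{\FF_p}$ are exactly the elements of multiplicative order $n$; fix one such root $\bar\zeta$. For any $f \geq 1$, the field $\FF_{p^f}$ contains an element of order $n$ if and only if $n$ divides $\lvert \FF_{p^f}^\times \rvert = p^f - 1$, because $\FF_{p^f}^\times$ is cyclic. Thus the smallest $f$ with $\bar\zeta \in \FF_{p^f}$ — equivalently, the degree over $\FF_p$ of the irreducible factor of $\overline{\phi_n(x)}$ having $\bar\zeta$ as a root — is the least positive integer $f$ with $p^f \equiv 1 \bmod n$. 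Since every primitive $n$th root of unity has the same order $n$, all irreducible factors share this degree $f$, and by Theorem \ref{DedCrit} this is precisely the residue class degree of each prime above $p$. The one step to handle carefully is this finite-field argument pinning $f$ down as $\ord_{(\ZZ/n\ZZ)^\times}(p)$; the remainder is a direct appeal to the separability of $x^n - 1$ modulo $p$ together with Dedekind's criterion for splitting.
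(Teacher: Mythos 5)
Your proof is correct. Note, however, that the paper does not prove this lemma at all: it is quoted verbatim from Cassels--Fr\"ohlich (III.1 Lemma 4), so there is no internal proof to compare against. Your argument is the standard ``global'' route and fits naturally with the paper's toolkit: Lemma \ref{cyclodisc} gives $\Ocal_{\QQ(\zeta_n)}=\ZZ[\zeta_n]$, so Theorem \ref{DedCrit} applies at every prime; separability of $\overline{x^n-1}$ (using $p\nmid n$) gives unramifiedness; and the finite-field computation correctly pins down the common degree of the irreducible factors of $\overline{\phi_n(x)}$ as the order of $p$ in $(\ZZ/n\ZZ)^\times$, since $\bar\zeta\in\FF_{p^f}$ exactly when $\bar\zeta^{p^f-1}=1$, i.e.\ when $n\mid p^f-1$. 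The two small compressions --- that the roots of $\overline{\phi_n(x)}$ are precisely the elements of order $n$ (which needs the separability of $\overline{x^n-1}$ you already established, via $x^n-1=\prod_{d\mid n}\phi_d(x)$), and the passage from ``$\FF_{p^f}$ contains an element of order $n$'' to ``$\FF_{p^f}$ contains $\bar\zeta$'' (immediate since the $n$-th roots of unity form a cyclic group generated by any element of order $n$) --- are both legitimate. By contrast, the proof in the cited source, and the other classical route, proceeds via the Frobenius element: the residue degree of a prime above $p$ is the order of Frobenius, which acts by $\zeta_n\mapsto\zeta_n^p$, hence has order $\operatorname{ord}_n(p)$. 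That argument presupposes the Galois formalism, while yours needs only Dedekind's splitting criterion plus the monogeneity of $\ZZ[\zeta_n]$, which is arguably more in the spirit of the methods this paper actually develops.
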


Bringing our attention to radical and Kummer extensions, consider the polynomial $x^n-\alpha$. One computes
\begin{equation}\label{discf}
\Delta_{x^n-\alpha}=\left(-1\right)^\frac{n^2-n}{2}n^n(-\alpha)^{n-1}.
\end{equation}
One can also derive this by specializing Theorem 4 of \citep{DiscTri}. 

The following describes splitting in Kummer extensions.

\begin{lemma}\label{Kummersplit} \cite[III.2 Lemma 5]{CasselsFrohlich}:  The discriminant of $K=\QQ\left(\zeta_n,\sqrt[n]{\alpha}\right)$ over $\QQ\left(\zeta_n\right)$ divides $n^n\alpha^{n-1}$. A prime $\gp$ of $\ZZ\left[\zeta_n\right]$ is unramified in $K$ if $\gp\dnd n\alpha$. In this case, the residue class degree of $\gp$ is the least positive integer $f$ such that $\alpha^f\equiv x^n \bmod \gp$ is solvable.
\end{lemma}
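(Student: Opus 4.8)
The plan is to handle the three assertions in turn, relying on the discriminant computation \eqref{discf}, the relative index formula \eqref{polyfieldgeneral}, and the relative version of Dedekind's criterion for splitting (Theorem \ref{DedCrit}). Write $L=\QQ(\zeta_n)$, $\theta=\sqrt[n]{\alpha}$, and $f(x)=x^n-\alpha$, so $K=L(\theta)$ and $\Ocal_L=\ZZ[\zeta_n]$. For the divisibility claim, Equation \eqref{discf} gives $\Delta_f=\pm n^n\alpha^{n-1}$ (the sign being a unit), while \eqref{polyfieldgeneral} reads $\Delta_f=\Delta_{K/L}\left[\Ocal_K:\Ocal_L[\theta]\right]^2$ as ideals. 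Hence $\Delta_{K/L}$ divides $\Delta_f=n^n\alpha^{n-1}$ as ideals, which is the first assertion.

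Next, suppose $\gp\nmid n\alpha$. Then $\gp$ divides neither $n^n\alpha^{n-1}$ nor, by the divisibility just established, the relative discriminant $\Delta_{K/L}$; since a prime ramifies in $K/L$ exactly when it divides $\Delta_{K/L}$, the prime $\gp$ is unramified. Moreover $\gp\nmid\Delta_f$, so by \eqref{polyfieldgeneral} the index $\left[\Ocal_K:\Ocal_L[\theta]\right]$ is prime to $\gp$, and the relative Dedekind splitting criterion applies: the factorization of $\gp$ in $\Ocal_K$ mirrors the factorization of $\overline{f(x)}=x^n-\overline{\alpha}$ over the residue field $\FF_\gp:=\Ocal_L/\gp$. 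The discriminant $\pm n^n\overline{\alpha}^{\,n-1}$ is nonzero in $\FF_\gp$, so $x^n-\overline{\alpha}$ is separable, consistent with $\gp$ being unramified.

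For the residue class degree I would analyze the factorization of $x^n-\overline{\alpha}$ over $\FF_\gp$. Because $\gp\nmid n$, the reduction $\overline{\zeta_n}$ is a primitive $n$-th root of unity, so $\FF_\gp$ contains the full group $\mu_n$. Fixing one root $\theta$ of $x^n-\overline{\alpha}$ in $\overline{\FF_\gp}$, every root has the form $\theta\,\overline{\zeta_n}^{\,i}$ and therefore already lies in $\FF_\gp(\theta)$; this forces all irreducible factors of $x^n-\overline{\alpha}$ to share the common degree $f=[\FF_\gp(\theta):\FF_\gp]$, which is then the residue class degree of every prime above $\gp$. Since $\FF_\gp\supseteq\mu_n$, Kummer theory over the finite field $\FF_\gp$ identifies this degree with the order of $\overline{\alpha}$ in $\FF_\gp^\times/(\FF_\gp^\times)^n$, i.e. the least positive $f$ for which $\overline{\alpha}^{\,f}$ is an $n$-th power in $\FF_\gp$, equivalently for which $\alpha^f\equiv x^n\bmod\gp$ is solvable.

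I expect the main obstacle to be this last step: justifying both that the irreducible factors over $\FF_\gp$ all have one degree and that this degree equals the multiplicative order of $\overline{\alpha}$ modulo $n$-th powers. The cleanest route is to use that $\FF_\gp^\times$ is cyclic with $n\mid\lvert\FF_\gp^\times\rvert$ (from $\mu_n\subseteq\FF_\gp$), so $\FF_\gp^\times/(\FF_\gp^\times)^n\cong\ZZ/n\ZZ$; then $\overline{\alpha}^{\,f}$ is an $n$-th power precisely when $f$ annihilates the class of $\overline{\alpha}$, and the least such $f$ is that class's order, which Kummer theory equates with $[\FF_\gp(\theta):\FF_\gp]$. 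Combining the three paragraphs yields the lemma.
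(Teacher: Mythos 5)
The paper gives no proof of this lemma at all: it is imported verbatim from Cassels--Fr\"ohlich \cite[III.2, Lemma 5]{CasselsFrohlich}, so there is no in-paper argument to compare against, and your proposal has to be judged on its own merits. On those merits it is correct and complete. The first two steps are exactly the paper's standard toolkit: \eqref{discf} plus the relative index formula \eqref{polyfieldgeneral} give $\Delta_{K/L}\mid \left(n^n\alpha^{n-1}\right)$ as ideals, and the equivalence of ramification with division of the relative discriminant then handles the unramifiedness claim; invoking the relative form of Theorem \ref{DedCrit} (the paper itself points to \cite[Chapter I, Theorem 7.4]{Janusz} for this) is legitimate because $\gp\nmid\Delta_f$ forces $\gp$ to be prime to the index ideal $\left[\Ocal_K:\Ocal_L[\theta]\right]$. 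The only step that deserves extra care is the final appeal to ``Kummer theory over $\FF_\gp$,'' and it does hold: with $q=\lvert\Ocal_L/\gp\rvert$ one has $n\mid q-1$, and for a root $\theta$ of $x^n-\overline{\alpha}$ the Frobenius acts by $\theta\mapsto\theta^q=\theta\cdot\overline{\alpha}^{(q-1)/n}$, so $\left[\FF_\gp(\theta):\FF_\gp\right]$ equals the multiplicative order of $\overline{\alpha}^{(q-1)/n}$, which a short computation with a generator of $\FF_\gp^\times$ identifies with the order of $\overline{\alpha}$ in $\FF_\gp^\times/\left(\FF_\gp^\times\right)^n$, i.e.\ with the least $f$ for which $\alpha^f\equiv x^n\bmod\gp$ is solvable. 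Writing out that one computation would make your proof fully self-contained; as a bonus, your argument also recovers the spirit of Porism \ref{RadicalSplitting}, since it shows the factorization of $\gp$ literally mirrors that of $x^n-\overline{\alpha}$ over the residue field.
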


\section{Monogeneity and Ramification}\label{monoandram}



In this section we present a proposition relating monogeneity and ramification. The result is likely classical, but we include it here to highlight our methods. 

\begin{proposition}\label{nonmonorelative}
Let $L$ be a number field, $h(x)$ a monic, irreducible polynomial in $\Ocal_L[x]$, and $\eta$ a root of $h(x)$. Suppose $\gp$ is a prime of $L$ above the rational prime $p$ such that $\gp \mid \Delta_h$. Let $M$ be an extension of $L$ such that $h(x)$ is irreducible in $M$. If $\gp$ is ramified in $M$, then $p\mid [\Ocal_{M(\eta)}:\Ocal_M[\eta]]$.
\end{proposition}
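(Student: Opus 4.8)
The plan is to apply the relative Dedekind index criterion (Theorem \ref{Dedekindindex}) at a well-chosen prime $\gq$ of $M$ lying over $\gp$ and show that the criterion \emph{fails} there, so that $(\Ocal_{M(\eta)})_\gq \neq (\Ocal_M)_\gq[\eta]$; this forces $\gq$, and hence $p$, to divide $[\Ocal_{M(\eta)}:\Ocal_M[\eta]]$. Since $\gp$ is ramified in $M$, I fix a prime $\gq\mid\gp$ of $\Ocal_M$ with ramification index $e:=e(\gq/\gp)\geq 2$.

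First I would record two reductions. Because $\gp\mid\Delta_h$ and $h$ is monic, $\overline{h}$ is not separable over the residue field $\Ocal_L/\gp$; writing $\overline{h}=\prod_j\overline{\phi_j}^{\,n_j}$ for its factorization into distinct monic irreducibles with lifts $\phi_j\in\Ocal_L[x]$, some multiplicity $n_{j_0}\geq 2$. The defining congruence then shows every coefficient of $h-\prod_j\phi_j^{n_j}$ lies in $\gp$, hence in $\gp\Ocal_M\subseteq\gq^e\subseteq\gq^2$; this extra divisibility, coming entirely from the ramification of $\gp$ in $M$, is the crux of the argument. Passing to the larger residue field $\Ocal_M/\gq$, each $\overline{\phi_j}$ (separable over a finite field) splits into distinct irreducibles, refining the factorization to $\overline{h}=\prod_i\overline{g_i}^{\,m_i}$ with $g_i\in\Ocal_M[x]$, where $m_i=n_j$ whenever $\overline{g_i}\mid\overline{\phi_j}$.

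Next I would analyze the auxiliary polynomial $d(x)=\bigl(h-\prod_i g_i^{m_i}\bigr)/\pi_\gq$ appearing in the criterion. Splitting the numerator as $\bigl(h-\prod_j\phi_j^{n_j}\bigr)+\bigl(\prod_j\phi_j^{n_j}-\prod_i g_i^{m_i}\bigr)$, the first summand has coefficients in $\gq^2$ by the previous step, so it contributes nothing to $\overline{d}$. For the second summand I would expand each $\phi_j-\prod_{\overline{g_i}\mid\overline{\phi_j}}g_i$ (a polynomial with coefficients in $\gq$) to first order: the quadratic terms land in $\gq^2$ and drop out, leaving $\overline{d}$ congruent to a sum in which every term carries a factor $\overline{\phi_j}^{\,n_j-1}$ together with full powers $\overline{\phi_k}^{\,n_k}$ for $k\neq j$. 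Choosing the repeated factor $\overline{g_{i_0}}\mid\overline{\phi_{j_0}}$ with $m_{i_0}=n_{j_0}\geq 2$, one checks term by term that $\overline{g_{i_0}}$ divides every summand, hence $\overline{g_{i_0}}\mid\overline{d}$. Since $m_{i_0}-1\geq 1$, the polynomial $\overline{g_{i_0}}^{\,m_{i_0}-1}$ is then not coprime to $\overline{d}$, so Theorem \ref{Dedekindindex} fails at $\gq$ and the proposition follows.

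The main obstacle is the bookkeeping forced by using two different factorizations: the natural repeated factor and the ramification-induced divisibility live over $\Ocal_L/\gp$, whereas the index criterion demands the irreducible factorization over $\Ocal_M/\gq$. The argument hinges on (i) the fact that the conclusion of Theorem \ref{Dedekindindex} is intrinsic, so one is free to build the $\gq$-lifts $g_i$ out of the $\gp$-lifts $\phi_j$, and (ii) the first-order expansion modulo $\gq^2$ showing that the discrepancy between the two lifts still inherits the repeated factor. I would take particular care to verify that the separability of $\overline{\phi_j}$ keeps the refined factors $\overline{g_i}$ distinct, which is exactly what makes the multiplicity identity $m_i=n_j$ and the subsequent divisibility count consistent.
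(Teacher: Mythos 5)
Your proof is correct and follows essentially the same route as the paper: apply the Kumar--Khanduja criterion (Theorem \ref{Dedekindindex}) at a prime $\gq$ of $M$ ramified over $\gp$, and exploit the fact that ramification places the coefficients of $h-\prod_j\phi_j^{n_j}$ in $\gp\Ocal_M\subseteq\gq^2$, so that the repeated factor forced by $\gp\mid\Delta_h$ still divides $\overline{d}$ and the criterion fails. If anything, your bookkeeping is more careful than the paper's: you explicitly refine the mod-$\gp$ factorization to the irreducible factorization mod $\gq$ and verify via the first-order expansion that the repeated factor survives in $\overline{d}$, whereas the paper works directly with the mod-$\gp$ factorization $h_0^{e_0}h_1$ and a root $\eta_0$ of $h_0$, leaving that refinement (and the independence of the criterion from the choice of lifts) implicit.
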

 
The setup of Proposition \ref{nonmonorelative} is summarized in Figure \ref{notmono}.
\begin{figure}[h!]
\centering
\hspace*{.01 in}
\xymatrix{
 M(\eta) \ar@{-}[d]^{\text{not monogenic via } \eta} & \\
 M \ar@{-}[d] & \pcal^{e>1} \prod\limits_i\pcal_i^{e_i} \ar@{-}[d] \\
 L & \gp }
\caption{Diagram for Proposition \ref{nonmonorelative}}
\protect{\label{notmono}}
\end{figure}

\begin{proof} We will use Theorem \ref{Dedekindindex} to show that $p$ divides $\left[\Ocal_{M(\eta)}:\Ocal_M[\eta]\right]$. Reducing $h(x)$ modulo $\gp$ and choosing lifts of the irreducible factors to $\Ocal_L[x]$, we have
\begin{equation}\label{modulogp}
h(x)\equiv h_0(x)^{e_0} h_1(x) \bmod \gp,
\end{equation}
where $e_0>1$. Such an $h_0$ exists since $\gp \mid \Delta_h$. 

Let $\pcal$ be a prime of $M$ that is ramified above $\gp$. Consider the element of $\left(\Ocal_M\right)_{\pcal}[x]$ given by
\[d(x) = \frac{h(x)-h_0(x)^{e_0} h_1(x)}{\pi_{\pcal}}.\]
Let $\eta_0$ be a root of $h_0$ in some extension of $\left(\Ocal_M\right)_{\pcal}$. 
For $\eta$ to yield a power $\Ocal_M$-basis, it is necessary that $d(\eta_0)\not\equiv 0 \bmod \pi_{\pcal}^2$. Equation \eqref{modulogp} shows that $d(\eta_0)\equiv 0 \bmod \pi_{\gp}$. Since $\pi_{\pcal}^2\mid \pi_{\gp}$,  we see $d(\eta_0)\equiv 0 \bmod \pi_{\pcal}^2$ and $p\mid \left[\Ocal_{M(\eta)}:\Ocal_M[\eta]\right]$.
\end{proof}

Proposition \ref{nonmonorelative} sheds some light on the monogeneity of cyclotomic relative extensions:

\begin{corollary}\label{ZetaNotMono}
Let $L$ be a number field in which the $n^{\text{th}}$ cyclotomic polynomial is irreducible, where $n>2$ is any integer not congruent to $2$ modulo $4$. Then $\Ocal_L\left[\zeta_n\right]=\Ocal_{L\left(\zeta_n\right)}$ if and only if $\gcd\left(n,\Delta_L\right)= 1$.
\end{corollary}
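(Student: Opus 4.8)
The plan is to establish the biconditional in two directions, treating the ``only if'' direction via Proposition \ref{nonmonorelative} and the ``if'' direction via arithmetic disjointness (Proposition \ref{arithdisjoint}).

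\smallskip

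For the ``only if'' direction, I would argue by contraposition: assume $\gcd(n,\Delta_L)\neq 1$ and show $\Ocal_L[\zeta_n]\neq\Ocal_{L(\zeta_n)}$. Let $p$ be a rational prime dividing both $n$ and $\Delta_L$. I would like to apply Proposition \ref{nonmonorelative} with $h(x)=\phi_n(x)$ the $n^{\text{th}}$ cyclotomic polynomial (irreducible over $L$ by hypothesis), $\eta=\zeta_n$, $M=L$, and base field $\QQ$; here the roles are reversed from the statement, so I would set the ``inner'' field to $\QQ$ and take $\gp=p\ZZ$. By Corollary \ref{cyclopolydisc}, $p\mid n$ forces $p\mid\Delta_{\phi_n}$, so $h(x)$ satisfies the hypothesis $\gp\mid\Delta_h$ over $\QQ$. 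Since $p\mid\Delta_L$, the prime $p$ ramifies in $L$, i.e. some prime $\pcal$ of $L$ lies over $p$ with ramification index $>1$. Proposition \ref{nonmonorelative} then yields $p\mid[\Ocal_{L(\zeta_n)}:\Ocal_L[\zeta_n]]$, so the index is not $1$ and monogeneity via $\zeta_n$ fails. I would double-check that the hypotheses of Proposition \ref{nonmonorelative} are met with $L$ replaced by $\QQ$ and $M$ by $L$: $\phi_n(x)$ must remain irreducible over $L$ (given), and $p$ ramified in $L$ is exactly $p\mid\Delta_L$.

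\smallskip

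For the ``if'' direction, suppose $\gcd(n,\Delta_L)=1$ and show $\Ocal_L[\zeta_n]=\Ocal_{L(\zeta_n)}$. The natural tool is Proposition \ref{arithdisjoint} applied to $M=L$ and $N=\QQ(\zeta_n)$ over the base $\QQ$. Linear disjointness of $L$ and $\QQ(\zeta_n)$ over $\QQ$ follows from $\phi_n$ being irreducible over $L$, which forces $[L(\zeta_n):L]=\phi(n)=[\QQ(\zeta_n):\QQ]$. For arithmetic disjointness I must verify $\gcd(\Delta_{L/\QQ},\Delta_{\QQ(\zeta_n)/\QQ})=(1)$ as ideals of $\ZZ$. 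By Lemma \ref{cyclodisc} the rational primes dividing $\Delta_{\QQ(\zeta_n)/\QQ}$ are exactly those dividing $n$, and the primes dividing $\Delta_{L/\QQ}=\Delta_L$ are by hypothesis coprime to $n$; hence the two discriminants share no prime factor and the extensions are arithmetically disjoint. Proposition \ref{arithdisjoint} then gives $\Ocal_{L(\zeta_n)}=\Ocal_L\cdot\Ocal_{\QQ(\zeta_n)}$ as $\Ocal_{\QQ}=\ZZ$-modules. Finally, since $\Ocal_{\QQ(\zeta_n)}=\ZZ[\zeta_n]$ (Lemma \ref{cyclodisc}), the product $\Ocal_L\cdot\ZZ[\zeta_n]$ equals $\Ocal_L[\zeta_n]$, completing this direction.

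\smallskip

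I expect the main obstacle to be the careful application of Proposition \ref{nonmonorelative} in the ``only if'' direction, specifically keeping the base fields straight: the proposition is stated for a tower $L\subseteq M$ with a polynomial over $\Ocal_L$, and here I want to instantiate it with the rational base $\QQ$ playing the role of ``$L$'' and the field $L$ of the corollary playing the role of ``$M$.'' I must confirm that $\phi_n(x)\in\ZZ[x]$, that $p\mid\Delta_{\phi_n}$ whenever $p\mid n$ (from Corollary \ref{cyclopolydisc}, noting the numerator $n^{\phi(n)}$ guarantees every prime dividing $n$ divides the discriminant since $n\not\equiv 2\bmod 4$ keeps $\phi_n$ nontrivial and the discriminant a genuine integer $>1$), and that $p\mid\Delta_L$ is equivalent to ramification of $p$ in $L$. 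A minor subtlety worth checking is the edge behavior of the discriminant formula and the hypothesis $n>2$, $n\not\equiv 2\bmod 4$, which ensures $\phi(n)>1$ so that the relevant extension is nontrivial and Proposition \ref{nonmonorelative} applies nonvacuously.
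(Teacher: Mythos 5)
Your proposal is correct and takes essentially the same route as the paper's own proof: the ``only if'' direction is the contrapositive supplied by Proposition \ref{nonmonorelative} (instantiated with $\QQ$ as the base field, $h=\phi_n$, and your $L$ in the role of $M$), and the ``if'' direction uses arithmetic disjointness of $L$ and $\QQ(\zeta_n)$ together with Proposition \ref{arithdisjoint}. The verifications you add --- that every prime dividing $n$ divides $\Delta_{\phi_n}$ precisely because $n\not\equiv 2 \bmod 4$, and that $\gcd(n,\Delta_L)=1$ forces the discriminants to be coprime --- are exactly the details the paper leaves implicit.
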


\begin{proof}
The contrapositive is given by Proposition \ref{nonmonorelative}. If $\gcd\left(n,\Delta_L\right)= 1$, then $L$ and $\QQ\left(\zeta_n\right)$ are arithmetically disjoint over $\QQ$, and the result follows from Proposition \ref{arithdisjoint}. One can also prove this direction via a computation with Theorem \ref{Dedekindindex}.
\end{proof}

We can contrast the above Proposition \ref{nonmonorelative} to the following example.
\begin{example}\label{quads}
Let $k,m\in \ZZ$ with $\gcd(k,m)=1$, $k$ and $m$ square-free, $m\equiv 1 \bmod 4$, and $k\equiv 2,3 \bmod 4$. One can use Theorem \ref{Dedekindindex} to show that a $\ZZ\left[\dfrac{1+\sqrt{m}}{2}\right]$-basis of $\Ocal_{\QQ\left(\sqrt{m},\sqrt{k}\right)}$ is given by $1$ and $\sqrt{k}$. Thus, in this case, a root of a polynomial in $\ZZ[x]$ yields a power $\ZZ\left[\dfrac{1+\sqrt{m}}{2}\right]$-basis for $\Ocal_{\QQ\left(\sqrt{m},\sqrt{k}\right)}=\ZZ\left[\dfrac{1+\sqrt{m}}{2},\sqrt{k}\right]$. The interested reader should consult \cite{RelIntegralQuartic} for an in-depth study of relative integral bases of quartic fields with quadratic subfields.
\end{example}

\section{Monogeneity of $K$ over $\QQ(\zeta_p)$}\label{Kummermono}

We wish to establish Theorem \ref{KoverQzeta}:  \textit{Let $p$ be an odd, rational prime, and let $\gp:=(1-\zeta_p)$ be the unique prime of $\ZZ[\zeta_p]$ above $p$. Let $\alpha\in \ZZ[\zeta_p]$, and suppose that $x^p-\alpha$ is irreducible in $\ZZ[\zeta_p][x]$. The ring of integers $\Ocal_{\QQ\left(\zeta_p,\sqrt[p]{\alpha}\right)}$ is $\ZZ[\zeta_p]\left[\sqrt[p]{\alpha}\right]$ if and only if $\alpha$ is square-free as an ideal of $\ZZ[\zeta_p]$ and the congruence 
\begin{equation}\label{Wiererichzeta2}
\alpha^p\equiv\alpha \bmod {(1-\zeta_p)^2}
\end{equation} 
\textbf{is not} satisfied.}

Note that Congruence \eqref{Wiererichzeta2} is exactly the Wieferich congruence, Congruence \eqref{Wieferich}, but with respect to the prime $(1-\zeta_p)$. 
We will see that the analogue of Congruence \eqref{Wieferich} in Theorem \ref{radical} is a bit more complicated. This is due to the potential for higher powers of a prime to divide $n$ and the need to accommodate arbitrary residue class degrees.

\begin{proof}
Recall that $\Delta_{x^p-\alpha}=\left(-1\right)^\frac{p^2-p}{2}p^p(-\alpha)^{p-1}$. Equation \ref{polyfieldgeneral} and the discussion afterwards show that for questions of monogeneity, we need only consider the prime divisors of $\Delta_{x^p-\alpha}$. We will contend with the prime divisors of $\alpha$, then we will contend with $\gp$. In both cases, we will use Theorem \ref{Dedekindindex}.

Suppose $\mathfrak{l}$ is a prime of $\ZZ[\zeta_p]$ dividing $\alpha$. The reduction of $x^p-\alpha$ modulo $\mathfrak{l}$ is $\overline{x}^p$. So, in the notation of Theorem \ref{Dedekindindex}, we have
\[d(x)=\dfrac{x^p-\alpha-x^p}{\pi_{\gl}}=\frac{-\alpha}{\pi_{\gl}}.\]
Now $v_{\gl}\left(\frac{-\alpha}{\pi_{\gl}}\right)=0$ if and only if $v_{\gl}(\alpha)=1$. If $v_{\gl}(\alpha)=1$, the reduction $\overline{\frac{-\alpha}{\pi_{\gl}}}$ generates the unit ideal. In particular, $\overline{\frac{-\alpha}{\pi_{\gl}}}$ is relatively prime to $\overline{x}^{p-1}$. Conversely, if $v_{\gl}(\alpha)>1$, then $\overline{\frac{-\alpha}{\pi_{\gl}}}=0$ and is not relatively prime to $\overline{x}^{p-1}$. With Theorem \ref{Dedekindindex}, we see \[\left(\Ocal_K\right)_{\gl}=\left(\ZZ[\zeta_p]\right)_{\gl}\left[\sqrt[p]{\alpha}\right]\]
if and only if $v_{\gl}(\alpha)=1$.

Next, we contend with $\gp$. We localize $\ZZ[\zeta_p]$ at $\gp$ and choose $1-\zeta_p$ to be the uniformizer. The reduction of $x^p-\alpha$ modulo $\gp$ is $\left(\overline{x-\alpha}\right)^p$.
We have
\[d(x)=\dfrac{x^p-\alpha-(x-\alpha)^p}{1-\zeta_p}.\]
Evaluating at $\alpha$, we see that $\overline{d(x)}$ is relatively prime to $\overline{x-\alpha}$ if and only if 
\[\alpha^p\not\equiv\alpha \bmod {(1-\zeta_p)^2}.\]
Applying Theorem \ref{Dedekindindex}, our result follows. Note, our argument here does not depend on whether or not $\gp$ divides $\alpha$.
\end{proof}


\section{Non-monogeneity of $K$ over $\QQ$}\label{Kummernonmono}

In this section, we will prove Theorem \ref{KoverQ}:  \textit{Suppose there exists a rational prime $l$ such that $l\equiv 1 \bmod n$ and $l<n\cdot\phi(n)$. Let $\alpha \in \ZZ\left[\zeta_n\right]$ be relatively prime to $l$.  Suppose further that $\alpha$ is an $\nth{n}$ power residue modulo some prime of $\ZZ\left[\zeta_n\right]$ above $l$ and that $x^n-\alpha$ is irreducible in $\ZZ\left[\zeta_n\right][x]$. Then $K=\QQ\left(\zeta_n,\sqrt[n]{\alpha}\right)$ \textbf{is not} monogenic over $\QQ$. Moreover, $l$ is an essential discriminant divisor, i.e., $l$ divides $\left[\Ocal_K:\ZZ[\theta]\right]$ for every $\theta$ such that $\QQ(\theta)=K$.}

\begin{proof}
We will use Dedekind's method for proving a number field is not monogenic. From Lemmas \ref{cyclosplit} and \ref{Kummersplit}, we see that $l$ splits completely in $K$. If $K$ is monogenic over $\QQ$, then Theorem \ref{DedCrit} shows that the factorization of $l$ in $K$ is mirrored by the factorization of a degree $n\cdot \phi(n)$ polynomial modulo $l$. Thus there is a degree $n\cdot \phi(n)$ polynomial that generates $K$ over $\QQ$ and factors into distinct linear factors modulo $l$. Since $l<n\cdot\phi(n)$, we see this is impossible. Thus $K$ is not monogenic over $\QQ$. Applying Theorem \ref{Hensel}, we see $l$ is in fact an essential discriminant divisor.
\end{proof}

\begin{remark}\label{generalKoverQ} If $k$ denotes the multiplicative order of $l$ modulo $n$, the number of irreducible polynomials in $\FF_l[x]$ of degree $k$ is $\frac{1}{k}\sum_{d\mid k}\mu\left(\frac{k}{d}\right)l^d$. If 
\[\frac{1}{k}\sum_{d\mid k}\mu\left(\frac{k}{d}\right)l^d<\frac{n\cdot \phi(n)}{k}\] 
and the requirements on $\alpha$ remain the same, then $K$ is not monogenic over $\QQ$ by the same methods used above. One can also obtain weakened hypotheses on $\alpha$ via these ideas. 
\end{remark}

\begin{example}\label{5and11}

Consider $n=5$ and $l=11$. We see $11<5\cdot 4$. Since $11\equiv 1 \bmod 5$, the prime $11$ splits completely in $\QQ(\zeta_5)$. For $11$ to split completely in $\QQ\left(\zeta_5,\sqrt[5]{\alpha}\right)$, we need $\alpha$ to be a $5^{\text{th}}$ power in $\FF_{11}$. This is satisfied by rational integers congruent to $\pm1 \bmod {11}$. Hence all rational integers $\alpha\equiv \pm 1 \bmod {11}$ for which $x^5-\alpha$ is irreducible in $\ZZ(\zeta_5)[x]$ yield non-monogenic $K$. 

\end{example}

\section{General Radical Extensions}\label{genradsection}

In this section we consider an arbitrary number field $L$ and an element $\alpha\in \Ocal_L$ such that $x^n-\alpha$ is irreducible over $L$. To avoid trivialities, we assume $n\geq 2$. For a prime $\gp$ of $\Ocal_L$, we write $p$ for the residue characteristic and $f$ for the residue class degree. If $\gp$ divides $n$, we factor $n=p^em$ with $\gcd(m,p)=1$. Define $\varepsilon$ to be congruent to $e$ modulo $f$ with $1\leq \varepsilon\leq f$. For convenience, define $\beta$ to be $\alpha$ to the power $p^{f-\varepsilon}$. 
The Wieferich congruence, Congruence \eqref{Wieferich}, generalizes to
\begin{equation}\label{Wieferich3}
\alpha^{p^{f-\varepsilon+e}}=\beta^{p^e}\equiv \alpha \bmod {\gp^2}.
\end{equation}

In the case where $e\leq f$, this is
\[\alpha^{p^f}\equiv \alpha \bmod {\gp^2}.\]

\begin{theorem}\label{radical}
The ring of integers of $L\left(\sqrt[n]{\alpha}\right)$ is $\Ocal_L\left[\sqrt[n]{\alpha}\right]$ if and only if $\alpha$ is square-free as an ideal of $\Ocal_L$ and every prime $\gp$ dividing $n$ \textbf{does not} satisfy Congruence \eqref{Wieferich3}.
\end{theorem}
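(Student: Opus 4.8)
The plan is to apply the relative Dedekind index criterion, Theorem \ref{Dedekindindex}, with base ring $R=\Ocal_L$, polynomial $f(x)=x^n-\alpha$, and root $\theta=\sqrt[n]{\alpha}$, working one prime $\gp\mid\Delta_{x^n-\alpha}$ at a time. By Equation \eqref{discf} and Equation \eqref{polyfieldgeneral}, we have $\Ocal_{L(\sqrt[n]{\alpha})}=\Ocal_L[\sqrt[n]{\alpha}]$ precisely when $\gp$ does not divide the index $\left[\Ocal_{L(\sqrt[n]{\alpha})}:\Ocal_L[\sqrt[n]{\alpha}]\right]$ for every prime $\gp$ dividing $\Delta_{x^n-\alpha}=(-1)^{(n^2-n)/2}n^n(-\alpha)^{n-1}$, i.e.\ for every $\gp$ dividing $n$ or dividing $\alpha$. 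This naturally splits the argument into two cases, mirroring the structure of the proof of Theorem \ref{KoverQzeta}.

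First I would dispose of the primes $\gl$ dividing $\alpha$ but not $n$. Here the reduction of $x^n-\alpha$ modulo $\gl$ is $\overline{x}^n$, so in the notation of Theorem \ref{Dedekindindex} the single repeated factor is $\overline{x}$ and $d(x)=(x^n-\alpha-x^n)/\pi_{\gl}=-\alpha/\pi_{\gl}$. Exactly as in the proof of Theorem \ref{KoverQzeta}, $v_{\gl}\!\left(-\alpha/\pi_{\gl}\right)=0$ iff $v_{\gl}(\alpha)=1$, and the coprimality of $\overline{d(x)}$ with $\overline{x}^{\,n-1}$ holds iff $\overline{d(x)}$ is a nonzero constant, i.e.\ iff $v_{\gl}(\alpha)=1$. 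Thus the local ring of integers is generated by $\sqrt[n]{\alpha}$ at every prime dividing $\alpha$ (and not $n$) if and only if $\alpha$ is square-free as an ideal. A prime $\gp$ dividing both $n$ and $\alpha$ should be handled within the second case, since the factorization of $x^n-\alpha$ mod $\gp$ is governed by the characteristic $p$; I would note explicitly, as the author does, that the $\gp\mid n$ computation is insensitive to whether $\gp\mid\alpha$.

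The heart of the matter is the primes $\gp$ dividing $n$, and this is where the complications flagged before the statement arise. Writing $n=p^em$ with $p\nmid m$, the reduction of $x^n-\alpha$ modulo $\gp$ factors as $(x^m-\overline{\alpha})^{p^e}$ in $\Ocal_L/\gp=\FF_{p^f}$, using that Frobenius is the $p$-power map over characteristic $p$; then $x^m-\overline{\alpha}$ itself factors into distinct irreducibles over $\FF_{p^f}$ (distinct because $p\nmid m$ makes it separable), each appearing with multiplicity $p^e$. The plan is to choose one such irreducible factor $\varphi(x)$, lift, and compute $d(x)=(x^n-\alpha-\prod_i f_i(x)^{e_i})/\pi_{\gp}$, then test whether $\overline{\varphi}^{\,p^e-1}$ is coprime to $\overline{d}$; by Theorem \ref{Dedekindindex} the local integrality fails exactly when some $\overline{\varphi}$ divides $\overline{d}$. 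The main obstacle, and the technical core, is translating this divisibility condition into the explicit Wieferich congruence \eqref{Wieferich3}. I expect this to hinge on evaluating $d$ at a root $\theta_0$ of $\varphi$ in $\overline{\FF_{p^f}}$ (equivalently at a lift), where $\theta_0^m=\overline{\alpha}$, and tracking the $\gp$-adic valuation of $x^n-\alpha-\prod f_i^{e_i}$ at that root. The roles of $f$ and $e$ enter because a root of $x^m-\overline{\alpha}$ lives in an extension of degree dividing $f$, so one must raise to a suitable $p$-power to return to the residue field — this is precisely where the parameter $\varepsilon\equiv e\pmod f$ with $1\le\varepsilon\le f$, the auxiliary $\beta=\alpha^{p^{f-\varepsilon}}$, and the exponent $p^{f-\varepsilon+e}$ in \eqref{Wieferich3} come from. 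I would carefully verify that the congruence $d(\theta_0)\not\equiv 0$ modulo the appropriate power of $\pi_{\gp}$ unwinds to $\beta^{p^e}=\alpha^{p^{f-\varepsilon+e}}\not\equiv\alpha\pmod{\gp^2}$, checking the reduction of the $e\le f$ case to $\alpha^{p^f}\equiv\alpha\pmod{\gp^2}$ as a sanity check against Theorem \ref{KoverQzeta} (where $f=1$, $e=1$, $m=1$ recovers $\alpha^p\equiv\alpha\pmod{\gp^2}$). Assembling the two cases via Equation \eqref{polyfieldgeneral} then yields the stated necessary and sufficient condition.
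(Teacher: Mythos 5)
Your overall route is the same as the paper's: apply Theorem \ref{Dedekindindex} to $x^n-\alpha$ one prime at a time, dispose of primes dividing $\alpha$ exactly as in Theorem \ref{KoverQzeta} (that part of your argument is correct), and for $\gp\mid n$ translate the coprimality test on $\overline{d}$ into Congruence \eqref{Wieferich3} by evaluating at roots of the repeated factor. However, in the $\gp\mid n$ case --- which you yourself flag as the technical core and defer to a later ``careful verification'' --- your setup contains a genuine error, located precisely where the theorem's new content (the parameters $\varepsilon$ and $\beta$) lives.

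The error: the reduction of $x^n-\alpha$ modulo $\gp$ does \emph{not} factor as $\left(x^m-\overline{\alpha}\right)^{p^e}$. In characteristic $p$ one has $\left(x^m-\overline{\alpha}\right)^{p^e}=x^n-\overline{\alpha}^{\,p^e}$, and $\overline{\alpha}^{\,p^e}=\overline{\alpha}$ holds only when $\overline{\alpha}$ lies in $\FF_{p^{\gcd(e,f)}}$; this is automatic when $f=1$ (which is why the analogous step is harmless in Theorem \ref{KoverQzeta}), but false for a general residue field $\FF_{p^f}$. With your factorization, the difference $x^n-\alpha-\prod_i f_i(x)^{e_i}$ reduces to the nonzero constant $\overline{\alpha}^{\,p^e}-\overline{\alpha}$ modulo $\gp$, so it is not divisible by $\pi_{\gp}$, your $d(x)$ is not a polynomial over $\left(\Ocal_L\right)_{\gp}$, and Theorem \ref{Dedekindindex} simply does not apply. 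The fix is the actual reason $\varepsilon$ and $\beta$ exist: the repeated factor must be $x^m-\overline{\beta}$ with $\overline{\beta}=\overline{\alpha}^{\,p^{f-\varepsilon}}$, the unique $p^e$-th root of $\overline{\alpha}$ \emph{inside} $\FF_{p^f}$; since $\varepsilon\equiv e \bmod f$, the exponent $f-\varepsilon+e$ is a multiple of $f$, whence $\overline{\beta}^{\,p^e}=\overline{\alpha}^{\,p^{f-\varepsilon+e}}=\overline{\alpha}$. Your stated mechanism --- that $\beta$ arises because a root of $x^m-\overline{\alpha}$ lives in an extension ``of degree dividing $f$'' and one must raise to a $p$-power to return to the residue field --- is not correct: the degrees of the irreducible factors of $x^m-\overline{\beta}$ over $\FF_{p^f}$ are governed by multiplicative orders modulo $m$, and $\beta$ is needed so that the repeated factor has coefficients in $\Ocal_L/\gp$ at all, before any roots are taken. (A secondary issue: folding primes with $\gp\mid\gcd(n,\alpha)$ into this case conflicts with your separability claim, since then $\overline{\beta}=0$ and $x^m-\overline{\beta}=x^m$ is inseparable; the paper instead treats all $\gp\mid\alpha$ by the constant-$d$ computation.) Once the factor is corrected, the paper's computation finishes the argument: lift the factorization of $\overline{x^m-\beta}$ as $x^m-\beta+a\pi_{\gp}$, pass to the unramified extension $\left(\Ocal_L\right)_{\gp}\left(\sqrt[m]{\beta},\zeta_m\right)$, evaluate $d$ at $\zeta_m^k\sqrt[m]{\beta}$, and note that $\left(a\pi_{\gp}\right)^{p^e}$ vanishes modulo $\pi_{\gp}^2$, leaving exactly the condition $\beta^{p^e}-\alpha\not\equiv 0 \bmod \gp^2$, the negation of Congruence \eqref{Wieferich3}.
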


\begin{proof} We need only consider the prime divisors of $\Delta_{x^n-\alpha}=(-1)^\frac{n^2-n}{2}n^n(-\alpha)^{n-1}$. For any primes dividing $\alpha$, the argument is straightforward and essentially the same as in the proof of Theorem \ref{KoverQzeta}.

Maintaining the notation outlined above, let $\gp$ be a prime of $\Ocal_L$ dividing $n$, but not $\alpha$. Noting $\beta^{p^e}\equiv \alpha \bmod {\gp}$, we have

\[x^n-\alpha\equiv \big(x^m-\beta\big)^{p^e} \bmod {\gp}.\]
With the notation of Theorem \ref{Dedekindindex},

\[d(x)=\dfrac{x^n-\alpha- \left(x^m-\beta+a\pi_{\gp}\right)^{p^e}}{\pi_{\gp}},\]
where $a$ is some element of $\left(\Ocal_L\right)_{\gp}$ so that the term $a\pi_{\gp}$ accommodates possible further factorization of $x^m-\beta$ modulo $\gp$. 

The relative primality of $\overline{d(x)}$ and the factors of $\overline{x^m-\beta}$ does not change upon extension, so it suffices to work in $\left( \Ocal_L \right)_{\gp} \left( \sqrt[m]{\beta},\zeta_m \right)$ and $\left( \Ocal_L \right)_{\gp} \left(\sqrt[m]{\beta},\zeta_m \right)$ modulo $\left(\pi_{\gp}\right)$. With Theorem \ref{Dedekindindex} in mind, we wish to show that $\overline{d(x)}$ does not have $\overline{\zeta_m^k\sqrt[m]{\beta}}$ as a root for any $k$. Evaluating,

\[d\left(\zeta_m^k\sqrt[m]{\beta}\right)=\dfrac{\beta^{p^e}-\alpha-\left(\beta-\beta+a\pi_{\gp}\right)^{p^e}}{\pi_{\gp}}=\dfrac{\beta^{p^e}-\alpha +\left(a\pi_{\gp}\right)^{p^e}}{\pi_{\gp}}.\]
Clearly, $d\left(\zeta_m^k\sqrt[m]{\beta}\right)\equiv 0\bmod {\pi_{\gp}}$ if and only if 

\[\beta^{p^{e}}-\alpha=\alpha^{p^{f-\varepsilon+e}}-\alpha\equiv 0 \bmod {\pi_{\gp}^2}.\]
Our result follows.
\end{proof}

Combining the above proof with the generalization of Dedekind's criterion for splitting, \cite[Chapter I, Theorem 7.4]{Janusz}, we obtain
\begin{porism}\label{RadicalSplitting}
As above, suppose $x^n-\alpha\in \Ocal_L[x]$ is irreducible with $\alpha$ square-free. Let $\gp$ be a prime of $L$ that does not divide $n$. Then the splitting of $\gp$ in $L\left(\sqrt[n]{\alpha}\right)$ is mirrored by the splitting of $\overline{x^n-\alpha}$ in $\Ocal_L/\gp[x]$, as in Theorem \ref{DedCrit}. In particular, $\gp$ splits completely if and only if $\overline{\alpha}\neq 0$ is an $n^{\text{th}}$ root in $\Ocal_L/\gp$. Moreover, if Congruence \eqref{Wieferich3} holds, we can remove the restriction that $\gp\nmid n$.
\end{porism}

\bibliography{Bibliography}
\bibliographystyle{abbrvnat}

\end{document}